\documentclass[12pt]{amsart}

% This first part of the file is called the PREAMBLE. It includes
% customizations and command definitions. The preamble is everything
% between \documentclass and \begin{document}.

\pagestyle{plain}
\usepackage{graphicx}              % to include figures
\usepackage{amsmath}               % great math stuff
\usepackage{amsfonts}              % for blackboard bold, etc
\usepackage{amsthm}                % better theorem environments
\usepackage{mathrsfs}
\usepackage{tikz-cd}
\usepackage{hyperref}
\usepackage[all,cmtip]{xy}
\entrymodifiers={+!!<0pt,\fontdimen22\textfont2>}
\usetikzlibrary{matrix,arrows,decorations.pathmorphing}
\usepackage[italian,english]{babel}
\usepackage[utf8]{inputenc}
\usepackage{MnSymbol}
\usepackage[autostyle]{csquotes}
%\usepackage[margin=1in]{geometry}

% various theorems, numbered by section

\newtheorem{thm}{Theorem}[section]
\newtheorem{lem}[thm]{Lemma}
\newtheorem{prop}[thm]{Proposition}
\newtheorem{cor}[thm]{Corollary}

\newtheorem{rmk}[thm]{Remark}
\newtheorem{qst}[thm]{Question}
\newtheorem*{phil}{Philosophy}

\newtheorem*{ack}{Acknowledgments}

\theoremstyle{definition}

\newtheorem{hyp}[thm]{Hypothesis}

  % for bolding symbols
      % for Real numbers

\newcommand{\PP}{\mathbb{P}}
 
\newcommand{\OO}{\mathcal{O}}

\newcommand{\MM}{\overline{\mathcal{M}}}

\newcommand{\CH}{\mathrm{CH}}
\newcommand{\U}{\mathcal{U}}
\newcommand{\F}{\mathcal{F}}
\newcommand{\w}{\omega}
\newcommand{\ch}{\mathrm{Ch}}
\newcommand{\se}{\sigma}
\newcommand{\im}{\mathrm{Im}}
\newcommand{\Z}{\mathcal{Z}}

\numberwithin{equation}{thm}

\begin{document}
\date{}
\nocite{*}

\title{\textbf{1-Cycles on Fano varieties}}

\author {cristian minoccheri}
\address{Department of Mathematics, Stony Brook University, Stony Brook, NY 11794}
\email{cristian.minoccheri@stonybrook.edu}

\author {xuanyu pan}
\address{Institute of Mathematics, AMSS, Chinese Academy of Sciences, 55 ZhongGuanCun East Road, Beijing, 100190, China and Max Planck Institute for Mathematics, Vivatsgasse 7, Bonn, Germany 53111}
\email{pan@amss.ac.cn}
\email{panxuanyu@mpim-bonn.mpg.de}
\date{\today}

\begin{abstract}
We prove results about 1-cycles on certain Fano varieties using techniques that rely on rational curves. Firstly, we show that Fano weighted complete intersections with index bigger than half their dimension have trivial first Griffiths group. Secondly, we prove that the first Chow group of most $2$-Fano weighted complete intersections, and of $2$-Fano conic-connected varieties in $\PP^n$ of high enough index (with $3$ obvious exceptions), are generated by lines. Furthermore, if the Fano variety of lines is irreducible, the first Chow group is isomorphic to $\mathbb{Z}$. \end{abstract}

\maketitle
\tableofcontents

\section{Introduction}

\textbf{First Griffiths group.}
Understanding the geometry of higher codimensional cycles on algebraic varieties is both a natural goal and a fairly difficult one to achieve. Even considering 1-cycles, there are still many open questions. In this note, we answer some of these questions for 1-cycles on some Fano varieties. We will always work over the field of complex numbers.

In section $2$, we provide an answer to a question of Voisin about the first Griffiths group in many new cases. Recall that, for a variety $X$, we have an inclusion of abelian groups $\CH_1(X)_{alg} \subset \CH_1(X)_{hom}$, and one defines the first Griffiths group Griff$_1(X)$ as the quotient of $\CH_1(X)_{hom}$ by $\CH_1(X)_{alg}$. Griffiths groups have attracted interest -- among other things -- as a way to tackle rationality problems. Voisin raised the following:
\begin{qst} \label{Voisin}
Do all Fano varieties have trivial first Griffiths group?

\end{qst}
In general, to answer this question seems quite difficult. However, there are no known counterexamples, and there are affirmative answers in dimension at most 3 (\cite{BS}, which requires a deep result in algebraic K-theory), and for Fano complete intersections in projective space (\cite{TZ}). In this paper, building on previous results of \cite{Min}, we answer the question for many Fano complete intersections in weighted projective space:

\begin{thm}
\label{thmA}
Let $X:=X_{d_1,...,d_c} \subset \PP_{\mathbb{C}}(1,1,1,a_3,...,a_n)$ be a smooth weighted complete intersection of dimension at least $3$ which is not isomorphic to a (standard) projective space. Let $\iota_X$ denote the index of $X$. If $$\iota_X > \frac{1}{2}dim(X),$$ then \upshape{Griff}$_1(X)=0$.

\end{thm}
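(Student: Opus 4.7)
The plan is to show that every homologically trivial $1$-cycle on $X$ is algebraically trivial by reducing the problem to the classes of lines on $X$ (meaning irreducible rational curves of degree $1$ with respect to $\OO_X(1)$) and then proving that all such lines are algebraically equivalent.

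First I would invoke the previous results of \cite{Min} cited in the excerpt. Under the hypothesis $\iota_X > \dim(X)/2$, those results should yield that $\CH_1(X)$ modulo algebraic equivalence is generated by a single class $[L]$ of a line: for every $\alpha \in \CH_1(X)$ there exist an integer $n$ and a line $L \subset X$ with $\alpha - n[L] \in \CH_1(X)_{alg}$. If this is not stated literally in \cite{Min}, I expect it to follow from a mild extension of the bend-and-break and specialization techniques developed there.

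Next I would show that any two lines on $X$ are algebraically equivalent. The natural parameter space is the Fano scheme $F(X)$ of lines on $X$, and the key point is that under $\iota_X > \dim(X)/2$, $F(X)$ is non-empty, smooth at a general point, and connected (indeed irreducible). For smooth complete intersections in ordinary projective space this is classical, using the splitting of the normal bundle of a line and deformation theory; in the weighted setting one must handle the quotient singularities of $\PP(1,1,1,a_3,\ldots,a_n)$ and keep careful track of the weights $a_i$ in the numerical invariants of lines. Once $F(X)$ is connected, the universal family induces a map $F(X) \to \CH_1(X)/\!\sim_{\mathrm{alg}}$ which is constant on components, so every line on $X$ lies in the same algebraic equivalence class $[L]$.

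Combining the two steps, $\CH_1(X)/\CH_1(X)_{alg}$ is cyclic, generated by $[L]$. Since $X$ has dimension at least $3$ and is not a standard projective space, $[L] \cdot \OO_X(1) > 0$, so $[L]$ is non-torsion in $H_2(X,\mathbb{Z})$, and no nonzero multiple of $[L]$ is homologically trivial. Hence any $\alpha \in \CH_1(X)_{hom}$ with $\alpha \equiv n[L]$ modulo algebraic equivalence must satisfy $n=0$, so $\alpha \in \CH_1(X)_{alg}$ and $\mathrm{Griff}_1(X)=0$. The main obstacle I anticipate is the connectivity of $F(X)$ in the weighted setting and matching it precisely to the bound $\iota_X > \dim(X)/2$: the arguments of \cite{TZ} for hypersurfaces in $\PP^n$ do not transfer verbatim because of the quotient singularities and the more delicate behaviour of lines near the higher-weight coordinate points, so careful book-keeping of the weights $a_i$ in the relevant dimension and normal bundle computations will be required.
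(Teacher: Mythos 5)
Your skeleton (reduce to lines, then show lines are mutually algebraically equivalent, then use that the line class is non-torsion in $H_2$) matches the paper's, but both of your key steps are left unproven, and the substitutes you propose for them do not work as stated. First, your Step 1 --- that modulo algebraic equivalence $\CH_1(X)$ is generated by a single line class --- is exactly the hard part of the theorem, and you defer it to ``a mild extension'' of \cite{Min}. The paper actually proves it in two moves: by \cite[Theorem 1.3]{TZ}, $\CH_1(X)$ of a rationally connected variety is generated by rational curves; then, for a rational curve of class $m\alpha$ with $m\ge 2$, the expected dimension of $\mathrm{Mor}(\PP^1,X;0\mapsto p,\infty\mapsto q)$ is $m(\sum a_i-\sum d_j)-(n-c)\ge 2(\sum a_i-\sum d_j)-(n-c)\ge 2$, so bend-and-break degenerates the curve, iteratively, into a \emph{connected} chain of lines. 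The hypothesis $\iota_X>\frac{1}{2}\dim(X)$ is used precisely and only to make this expected-dimension count come out $\ge 2$; your proposal never identifies where this hypothesis enters, which is a sign the core mechanism is missing.

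Second, to compare lines you propose to prove that the Fano scheme $F(X)$ is connected (even irreducible) via ``splitting of the normal bundle of a line and deformation theory.'' Normal-bundle computations give smoothness and dimension of $F(X)$, not connectedness; connectedness of Fano schemes of lines is a genuinely harder statement even for complete intersections in ordinary $\PP^n$, and irreducibility typically fails for special members. The paper needs much less: Proposition \ref{basic} (from \cite{Min}) says every fiber of $ev_1^1:\MM_{0,1}(X,\alpha)\to X$ is non-empty and connected, hence any two lines \emph{meeting at a point} are algebraically equivalent --- and the output of bend-and-break is exactly a connected chain of lines, so only this weaker, pointwise statement is ever invoked. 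If you insist on global connectedness of $F(X)$, you would in any case deduce it from the fiberwise statement (surjective proper map with connected fibers over connected $X$), not from normal-bundle splitting. As written, both pillars of your argument are asserted rather than proved, so the proposal has a genuine gap.
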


An explicit example is given by cyclic covers of projective space of low degree (precisely, in the lower half of the Fano bound).

The fundamental idea is fairly simple: we use bend and break to write a rational curve as algebraically equivalent to a chain of curves of degree $1$; then, we use the fact from \cite{Min} that any two curves of degree $1$ meeting at a point are algebraically equivalent (since the fibers of the evaluation map are connected).

\

\textbf{First Chow group.}
In section $4$, we study the first Chow group of $2$-Fano varieties (i.e., such that $c_1(X)$ and $ch_2(X)$ are positive). The main technique will involve studying a Kontsevich moduli space of conics and the fibers of its total evaluation map. In particular, we will make use of a useful formula for the canonical bundle of the fiber of such evaluation map, for which we provide a direct proof in section $3$. Our main application is the following:

\begin{thm}
Let $X$ be a smooth, $2$-Fano, non-linear variety. Assume that either: 

1) $X_{d_1,...,d_c} \subset \PP(1,1,1,a_3,...,a_n)$ is a weighted complete intersection, with $d_1+...+d_c \leq a_3+...+a_n$, or

2) $X \subset \PP^n$ is conic-connected, with index $\iota_X \geq \frac{n+4}{2}$, or

3) $X \subset \PP^n$ has Picard number $1$ and index $\iota_X \geq \frac{3dim(X)+5}{4}$.

Then $\CH_1(X)$ is generated by lines. If furthermore the Fano variety of lines is irreducible, $\CH_1(X) \simeq \mathbb{Z}$. In particular, if $X$ is a general cyclic cover of projective space, $\CH_1(X) \simeq \mathbb{Z}$.
\end{thm}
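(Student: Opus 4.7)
The plan is to show, under each of the three hypothesis sets, that every class in $\CH_1(X)$ is rationally equivalent to an integer multiple of the class of a line, and then, when $F_1(X)$ is irreducible, to show that all line classes coincide. The central device is the Kontsevich space $\overline{M}_{0,2}(X,2)$ of two-pointed conics together with its total evaluation map
\[
ev : \overline{M}_{0,2}(X,2) \longrightarrow X \times X.
\]
If the general fiber of $ev$ is Fano, hence rationally connected, then it is in particular connected; since this fiber contains chains of two lines meeting at a point as boundary elements, any smooth conic through two general points is rationally equivalent on $X$ to a sum of two line classes $[L_1]+[L_2]$.

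To verify that the general fiber is Fano under each of the hypotheses (1)--(3), I would apply the canonical bundle formula for the fiber of the total evaluation map established in Section $3$. That formula expresses $-K_{\mathrm{fiber}}$ as a combination of terms coming from $c_1(T_X)^2$ and $\mathrm{ch}_2(T_X)$, plus a negative $\OO_X(d)$-contribution arising from pinning the two marked points. The $2$-Fano assumption forces the first two contributions to be positive, while the index bound in each case is exactly what is needed to prevent the constraint contribution from dominating: case (1) is the weighted-projective analogue, (2) adapts the classical conic-connected setup where the conic class equals $\OO_X(2)$, and (3) handles Picard number one using only the total index. The bulk of the work, and the main technical obstacle, will be checking these positivity inequalities uniformly across the three settings, and in case (2) isolating the three obvious exceptions alluded to in the abstract.

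Once smooth conics are known to be equivalent to pairs of lines, I would reduce arbitrary $1$-cycles to such conics. Since in each case $X$ is rationally connected (indeed $2$-Fano), any curve class is represented by a rational curve after adding a free rational curve. Then bend-and-break, applied with two general basepoints on a free rational curve of degree $d \ge 3$, degenerates it into a union of rational curves of smaller total degree; iterating yields a decomposition into conics and lines, so that combining with the previous step shows $\CH_1(X)$ is generated by lines.

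For the final statement, assume $F_1(X)$ is irreducible. Then the universal family $\Z \to F_1(X)$ of lines, via its second projection to $X$, exhibits any two lines as algebraically equivalent $1$-cycles. Under the hypotheses of the theorem, the same circle of ideas used in Theorem A -- bend-and-break to chains of degree-one curves, combined with connectedness of fibers of the evaluation map from the space of pointed lines -- will yield $\mathrm{Griff}_1(X) = 0$; in case (1) this is Theorem A itself, while in (2) and (3) the stronger index bounds should allow one to rerun the argument on an appropriate covering family of lines. Hence algebraic equivalence of lines upgrades to rational equivalence, and $\CH_1(X) \simeq \mathbb{Z}$ with generator the class of a line. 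The concluding assertion about cyclic covers then follows from the fact that for a general cyclic cover of low enough degree, $F_1(X)$ is irreducible (by a standard monodromy argument on the parameter space of such covers) and the hypotheses of (1) are satisfied.
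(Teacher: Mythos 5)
Your proposal captures the right ingredients (the canonical bundle formula of Section 3, the $2$-Fano positivity, the role of broken conics), but the logical engine that makes the paper's proof work is missing, and two of your steps are invalid as stated. First, the canonical bundle formula does not show that the general fiber $\F$ of $ev_2^2$ is Fano: it gives $-K_{\F}=\pi_*f^*(\ch_2(X))+\Delta$, i.e.\ an ample class plus an effective boundary divisor, which need not be ample. What the paper proves (Lemma 4.2) is that the boundary $\Delta\subset\F$ of broken conics is Fano, by adjunction: $-K_{\Delta}=(\pi_*f^*\ch_2(X))|_{\Delta}$. This forces a separate, and in cases (2) and (3) quite delicate, verification that $\Delta$ is \emph{connected} -- via ampleness of $\Delta$ for weighted complete intersections, via Fulton--Hansen for case (2), and via a tangency/$\PP^1$-bundle argument for case (3). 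Your proposal skips this entirely, and it is where the specific index bounds are actually used.

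Second, and more fundamentally, your reduction of an arbitrary curve to conics and lines via bend-and-break only produces \emph{algebraic} equivalence, since the parameter curve in the moduli space along which the conic degenerates need not be rational; this is exactly why that technique proves $\mathrm{Griff}_1(X)=0$ in Section 2 but cannot prove that $\CH_1(X)$ is generated by lines. Your attempted repair -- ``$\mathrm{Griff}_1(X)=0$ upgrades algebraic equivalence to rational equivalence'' -- is backwards: triviality of the Griffiths group says homologically trivial cycles are algebraically trivial, not that algebraically trivial cycles are rationally trivial. The paper's actual mechanism is different: it builds a fibration $\phi:Y\to X$ whose total space $Y$ is a family of broken conics and whose general fiber is the Fano (hence rationally connected) variety $\Delta$; for an arbitrary irreducible curve $C$ through a general point, Graber--Harris--Starr produces a \emph{section} of $Y\times_X C\to C$, and since $\CH_1(Y)$ is generated by fiber components (lines) and curves in a section contracted by $\phi$, pushing forward gives $[C]=\sum_i n_i\phi_*(\ell_i)$ as a \emph{rational} equivalence. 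Likewise, for the final claim, irreducibility of the Fano variety of lines gives only algebraic equivalence of lines; the paper needs its \emph{rational connectedness}, deduced from the $2$-Fano hypothesis (positivity of $c_1$ of the fiber of $ev_1^1$) together with Graber--Harris--Starr, to conclude that all lines are rationally equivalent and $\CH_1(X)\simeq\mathbb{Z}$.
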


The method we used involves constructing a fibration over the conic-connected variety (or weighted complete intersection) with generic fiber a space of broken conics $\Delta$, and showing that $\Delta$ is Fano to produce a section (by \cite{GHS}). The extra hypotheses concerning the index guarantee that $\Delta$ is connected (whereas for weighted complete intersections it follows from results in \cite{Min}).

\

While rational Chow groups of $1$-cycles on Fano varieties are better understood, the integral analogue of those results is more subtle. 

Let us first take a look, more generally, at the behavior of higher codimensional cycles. Already for $0$-cycles, there are some interesting phenomena. Koll\'ar, Miyaoka and Mori proved that, if $X$ is Fano, $\CH_0(X) \simeq \mathbb{Z}$ (\cite{KMM}). On the other hand, the geometry of zero cycles on surfaces which are not Fano can be already fairly complicated. For instance, Mumford proved that if $S$ is a K3 surface, then $\CH_0(S)$ has infinite dimension (\cite{mumford}). With respect to $1$-cycles, it is proved in \cite{TZ} that the first Chow group of a rationally connected variety is generated by rational curves, and that the first Chow group of $2$-Fano (i.e., such that $\ch_2(X)>0$) complete intersections in projective space is isomorphic to $\mathbb{Z}$. With respect to $2$-cycles, we know from \cite{Pan} that the second Chow group of hypersurfaces within the $3$-Fano range (i.e., such that $\ch_3(X)>0$) is isomorphic to $\mathbb{Z}$ (assuming furthermore that the Fano variety of lines is smooth). In the special case of cubic hypersurfaces, there is recent work of Mboro (\cite{Rene}).

These observations and Voisin's Question \ref{Voisin} suggest the following philosophy:
\begin{phil}
 The more positive the tangent bundle of a variety is, the simpler the Chow groups are.
 \end{phil}

Varieties that satisfy strong positivity conditions of the tangent bundle are higher Fano varieties (introduced by de Jong and Starr in \cite{HF}): one says that a variety $X$ is $m$-Fano if $\ch_k(X)>0$ for all $k=1,...,m$. By \cite{HF} and \cite{AC}, we know that $2$-Fano manifolds satisfy \enquote{higher} versions of similar properties of Fano manifolds; for instance, they admit a rational surface through every point. Based on the philosophy above, we are lead to formulate the following:
\begin{qst}
For $X$ a $2$-Fano variety with Picard number $1$ and pseudo-index at least $3$, is $\CH_1(X) \simeq \mathbb{Z}$?

\end{qst}

In this paper, we study this question for \em broken-conic-connected \em 2-Fano manifolds whose space of broken conics through $2$ general points is connected, and we prove that their first Chow group is generated by curves of degree $1$ with respect to some fixed ample line bundle. Our main application is to the cases considered in Theorem 1.3.

\begin{ack}
\textup{The first author would like to thank Jason Starr for many useful conversations, constant encouragement, and helpful suggestions, especially with respect to the proof of Corollary 4.7. The second author is thankful to Prof.~Pandharipande and Prof.~Voisin for the invitation to the conference Cycles and Moduli which is held in ETH. Some thoughts of this paper are dicussed with some participants in this conference. The second author is also grateful to his truly great friend Zhiyu Tian and Prof.~Starr for their encouragement and help through these years. Some parts of this paper are written up when the second author is in the Max Planck Institute for Mathematics and Institute of Mathematics of AMSS. The second author is grateful to both institutes for the support.}
\end{ack}

\section{First Griffiths group of weighted complete intersections}

Let us first fix the notation. We will denote by  $\PP(a_0,...,a_n)$ the (complex) weighted projective space, with weights $a_0,...,a_n$, and we will work with complete intersections $X_{d_1,...,d_c}$ in such space of some fixed degrees $d_1,...,d_c$. For a review of the basics on weighted projective spaces, we refer to \cite{Min}, or the classical sources \cite{Dol} and \cite{Mor}. Throughout this section we will always assume the following:

\begin{hyp}[Basic hypothesis]
Let $X:=X_{d_1,...,d_c} \subset \PP(a_0,...,a_n)$ be a smooth weighted complete intersection of degrees $d_1,...,d_c$. We will assume that the following conditions are satisfied:

(1) $X$ has dimension at least $3$, i.e. $c \leq n-3$;

(2) $a_0=a_1=a_2=1$;

(3) $a_3+...+a_n +2+c-n \leq d_1+...+d_c$;  and

(4) $d_1+...+d_c \leq a_3+...+a_n$.
\end{hyp}

Note that inequality (3) is not restrictive: if $a_3+...+a_n +2+c-n > d_1+...+d_c$, then $X$ would be isomorphic to a projective space by \cite{CMSB}. Inequality (2) should also not be restrictive: from all the examples we have (see \cite{PS}), for a weighted complete intersection to be smooth, at least three of the weights have to be $1$ (and often, many more).

Inequality (4) tells us that the degrees have to be small; in particular, $X_{d_1,...,d_c}$ is Fano. With respect to Question 1.1, (4) should be replaced by the exact Fano bound: $d_1+...+d_c \leq 2+ a_3+...+a_n$. On the other hand, our method requires $X_{d_1,...,d_c}$ to be covered by lines, therefore the bound $d_1+...+d_c \leq 1+ a_3+...+a_n$ would be necessary anyway.

\

The main ingredient to prove triviality of the first Griffiths groups of $X$ will be the connectedness of the fibers of the $1$-pointed evaluation map for certain Kontsevich spaces of stable maps to $X$. More precisely, let $\alpha$ denote a curve class of degree 1 on $X$. Then there are coarse moduli spaces $\MM_{0,r}(X, m \alpha)$ that parametrize $r-$pointed stable maps to $X$ with curve class $m \alpha$ (see for example \cite{FP} or \cite{CK} for definitions and basic properties). One also has evaluation maps $ev_r^m: \MM_{0,r}(X,m \alpha) \to X^r$ that associate to a map the image in $X^r$ of its $r$ marked points. 

\

The following result was essentially proven in \cite{Min}: 

\begin{prop}
\label{basic}
Let $X_{d_1,...,d_c} \subset \PP(a_0,...,a_n)$ be a smooth weighted complete intersection that satisfies the \em basic hypothesis\em. Let $\alpha$ be a curve class of degree $1$. Then every geometric fiber of $$ev_1^1: \MM_{0,1}(X,\alpha) \to X$$ is non-empty and connected.
\end{prop}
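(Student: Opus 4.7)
The plan is to deduce the proposition from the connectedness analysis of spaces of lines carried out in \cite{Min}, with only a minor adjustment to pass to the one-pointed evaluation map. I would proceed in three steps.

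\medskip

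\emph{Identification of the fiber.} Since $\alpha$ has degree $1$, a pointed stable map in $\MM_{0,1}(X,\alpha)$ has irreducible source: any contracted component would carry at most two special points (the marked point plus one node), violating stability. The geometric fiber of $ev_1^1$ over $x$ is therefore the parameter space of lines on $X$ through $x$, together with the choice of preimage of $x$ on the source, which can be rigidified by normalizing the parametrization so that the marked point sits at $0\in\mathbb{P}^1$.

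\medskip

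\emph{Non-emptiness.} Conditions (3) and (4) of the basic hypothesis imply that $X$ is Fano and that the expected dimension of the space of lines through a general point is non-negative. Combined with the explicit parametrization of lines used in \cite{Min}, this shows that $X$ is covered by lines, so the fiber of $ev_1^1$ over any $x$ is non-empty.

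\medskip

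\emph{Connectedness.} The core of the argument, following \cite{Min}, is to realize the fiber as a subscheme of a weighted projective space parametrizing tangent directions at $x$ together with the required lifts in the weighted coordinates, cut out by a sequence of hypersurfaces whose degrees are governed by the $d_j$. Condition (4) is precisely the numerical inequality required for the relevant connectedness theorem (a Bertini/Fulton--Hansen-style result adapted to the weighted setting) to apply, yielding connectedness of the fiber over every $x$. Crucially, no separate semi-continuity argument is needed: the complete-intersection description is valid at every geometric point, not merely on a dense open.

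\medskip

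The main obstacle is the adaptation of the classical connectedness theorems to the weighted projective ambient space, which is the technical heart of \cite{Min}. The passage from that setup to the one-pointed evaluation $ev_1^1$ here should be essentially formal, since the complete intersection description is intrinsic to the choice of the single point $x$ and does not require tracking a second point or using global geometric input.
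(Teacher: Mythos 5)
Your proposal matches the paper's treatment: the paper offers no argument for this proposition beyond the citation (``essentially proven in \cite{Min}''), and your reduction --- identifying each geometric fiber of $ev_1^1$ with the space of pointed lines through the given point (irreducible source by stability), then deferring non-emptiness and connectedness to the explicit description of that space in \cite{Min} as a subscheme of a weighted projective space cut out by hypersurfaces, with condition (4) of the basic hypothesis as the governing numerical bound --- is precisely the content being outsourced to that reference. Nothing in your sketch conflicts with how the paper uses the result elsewhere (e.g.\ the coveredness-by-lines discussion after the basic hypothesis and the later citations of \cite{Min}), so the approach is essentially the same.
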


We now state the main Theorem of this section:

\begin{thm}
\label{thmA}
Let $X_{d_1,...,d_c} \subset \PP(a_0,...,a_n)$ be a smooth weighted complete intersection that satisfies the \em basic hypothesis\em. Assume further that $$d_1+...+d_c \leq a_3+...+a_n + \frac{4-\dim(X_{d_1,...,d_c})}{2},$$ or, equivalently, that $$\iota_{X_{d_1,...,d_c}}>\frac{1}{2}dim(X_{d_1,...,d_c}).$$ Then \upshape{Griff}$_1(X_{d_1,...,d_c})=0$.
\end{thm}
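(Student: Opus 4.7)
The plan, outlined in the introduction, is to combine three ingredients: the rational connectedness of $X$, Mori's bend-and-break, and the fibre-connectedness provided by Proposition~\ref{basic}.

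First, I would use Proposition~\ref{basic} to show that all rational curves of degree $1$ on $X$ lie in a single algebraic equivalence class. Since $ev_1^1:\MM_{0,1}(X,\alpha)\to X$ is proper and surjective onto the irreducible variety $X$ with connected fibres, the source $\MM_{0,1}(X,\alpha)$ is itself connected; after forgetting the marked point, $\MM_{0,0}(X,\alpha)$ is connected as well. Pushing forward the universal curve from this connected base gives a connected algebraic family of $1$-cycles on $X$, so any two degree-$1$ curves on $X$ are algebraically equivalent. Let $\ell$ denote their common class modulo algebraic equivalence.

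Second, by the theorem of Tian--Zong cited in the introduction, since $X$ is rationally connected (being Fano), $\CH_1(X)$ is generated by classes of rational curves. Thus any $[C]\in \CH_1(X)_{hom}$ has a representative of the form $\sum n_i R_i$ with each $R_i$ a rational curve of some degree $d_i\geq 1$. For each $R_i$ with $d_i\geq 2$ I would apply Mori's bend-and-break: the hypothesis $\iota_X>\tfrac{1}{2}\dim X$ combined with $\dim X\geq 3$ forces $\iota_X\geq 2$, so the fibre of $ev_1^{d_i}:\MM_{0,1}(X,d_i\alpha)\to X$ over a point of $R_i$ has expected dimension at least $d_i\iota_X - 2\geq 2$, hence is positive. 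Iterated bend-and-break therefore degenerates $R_i$ to a tree of $d_i$ rational curves of degree $1$, and by the first step this tree is algebraically equivalent to $d_i\,\ell$.

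Combining, $C$ is algebraically equivalent to $\bigl(\sum n_i d_i\bigr)\ell$. Passing to homology: $[C]=0$ by assumption, while $[\ell]\neq 0$ because an effective curve has positive intersection with any ample divisor on the projective variety $X$. Hence $\sum n_i d_i=0$, so $C$ is algebraically equivalent to zero and $\mathrm{Griff}_1(X)=0$. The main technical points to verify carefully are (a) that the Tian--Zong statement genuinely applies in the weighted setting (this reduces to rational connectedness of $X$, which is automatic from Fano-ness) and (b) that the iterated bend-and-break is controlled, so that even when a reducible limit contains non-reduced components one can arrange each irreducible component eventually to reduce to a degree-$1$ rational curve; the uniform lower bound $d\iota_X-2\geq 2$ for $d\geq 2$ guarantees that this iteration can be carried out.
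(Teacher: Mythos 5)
Your overall strategy is the same as the paper's (Tian--Zong to reduce to rational curves, bend-and-break to reduce to chains of degree-$1$ curves, connectedness of the fibers of $ev_1^1$ to put all such curves in one algebraic equivalence class), and your first and third steps are fine --- indeed your explanation of why a homologically trivial combination of chains of lines is algebraically trivial is more explicit than the paper's. But your bend-and-break step has a genuine gap.

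You apply bend-and-break to the fiber of the \emph{one}-pointed evaluation map $ev_1^{d}:\MM_{0,1}(X,d\alpha)\to X$ and claim that expected fiber dimension $d\iota_X-2\geq 2$ suffices to break the curve, extracting only $\iota_X\geq 2$ from the hypotheses. This is not a valid form of bend-and-break: a positive-dimensional (even $2$-dimensional) family of rational curves through a single fixed point need not contain a degenerate member (lines through a point of $\PP^3$ form a complete $2$-dimensional family with no breaking). Mori's breaking lemma requires fixing \emph{two} points: one needs $\dim_{[f]}\mathrm{Mor}(\PP^1,X;0\mapsto p,\infty\mapsto q)\geq 2$, and since this dimension is bounded below by $\chi(\PP^1,f^*T_X(-2))=-K_X\cdot C-\dim X=m\iota_X-\dim X$, the condition for an $m\alpha$-curve with $m\geq 2$ is $2\iota_X\geq \dim X+2$. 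This is exactly where the hypothesis $\iota_X>\tfrac12\dim X$ enters, and it is the computation the paper carries out. The fact that your derived condition $d\iota_X\geq 4$ does not involve $\dim X$ at all --- and that you never use the index hypothesis beyond $\iota_X\geq 2$ --- is the tell-tale sign: on a high-dimensional variety of index $2$ the two-pointed deformation space of a conic has negative expected dimension and there is no reason for it to break. (The one-pointed version of bend-and-break that does exist requires $\dim_{[f]}\mathrm{Mor}(\PP^1,X;0\mapsto p)\geq H\cdot C+2$, which your count does not supply either.) To repair the proof, replace your dimension count with the two-pointed one, $\chi(\PP^1,T_X(-2))=m(\sum a_i-\sum d_j)-(n-c)\geq 2(\sum a_i-\sum d_j)-(n-c)\geq 2$, which is precisely the stated index bound.
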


The extra condition on the index is required to apply bend-and-break. 

As an explicit example, consider smooth, degree $r$ covers of $\PP^{n-1}$ branched along a hypersurface of degree $r \cdot a$ (with $a,b \geq 2$), i.e., smooth weighted hypersurfaces $X_{ra} \subset \PP(1,...1,a)$. Then, by Theorem \ref{thmA}, \upshape{Griff}$_1(X_{ra})=0$ for $2a(r-1) < n$, i.e., for cyclic covers within exactly half of the Fano bound $a(r-1)<n$.

\begin{rmk}
\normalfont
In \cite{TZ}, Tian and Zong prove that the first Griffiths group of complete intersections in (standard) projective space is trivial. A direct application of their the method to the weighted case could be possible (if one managed to apply a connectedness result of Hartshorne to weighted complete intersections), and one could ask what results one would get. Our method wouldn't work as well as the one by Tian and Zong in the case of complete intersections in projective space. For example, the bound we get in the case of a hypersurface is $d \leq \frac{n+1}{2},$ whereas the optimal bound is $d \leq n-1$. On the other hand, a direct application of their method in the weighted case would require bounds that involve the greatest common divisor of the weights, and therefore are not easily made explicit. In many instances, those bounds are worse than what we get here; for example, they are not satisfied if many weights have a large common factor.
\end{rmk}

In the proof of Theorem  \ref{thmA}, we will need the following fact about the topology of smooth weighted complete intersections (see \cite[Proposition 6]{Dim1}, \cite[Definition 1 and Corollary 10]{Dim2}):

\begin{prop}
\label{homology}
With the same notation as above, $H_2(X, \mathbb{Z}) \simeq \mathbb{Z} \alpha$.
\end{prop}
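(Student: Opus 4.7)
The plan is to compute $H_2$ of the ambient weighted projective space first, and then transfer the result to $X$ using a Lefschetz-type theorem. Throughout, write $\PP = \PP(1,1,1,a_3,\ldots,a_n)$.

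First, I would establish that $H_2(\PP, \mathbb{Z}) \simeq \mathbb{Z}$, generated by the class of a line $\ell$ contained in the standard linear $\PP^2 \subset \PP$ cut out by $x_3 = \cdots = x_n = 0$. The hypothesis $a_0 = a_1 = a_2 = 1$ from the basic hypothesis is exactly what makes this $\PP^2$ a smooth linear subspace of $\PP$ disjoint from the weighted singularities. One way to see the cohomology computation is via the Bialynicki-Birula decomposition associated to a generic $\CC^*$-action on $\PP$: this gives a cell decomposition with only even-dimensional cells, where the unique $2$-cell is the affine part of $\ell$. Hence the fundamental class of $\ell$ generates $H_2(\PP, \mathbb{Z})$, and this class is by construction the degree-$1$ curve class $\alpha$.

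Second, I would apply the Lefschetz hyperplane theorem for smooth (more generally quasi-smooth) weighted complete intersections in $\PP$, which is exactly the content of Dimca's work referenced in the statement. This theorem asserts that the inclusion $X \hookrightarrow \PP$ induces isomorphisms $H_k(X, \mathbb{Z}) \simeq H_k(\PP, \mathbb{Z})$ for all $k < \dim(X)$. Since the basic hypothesis gives $\dim(X) \geq 3$, we may take $k = 2$, and we conclude $H_2(X, \mathbb{Z}) \simeq \mathbb{Z} \alpha$, with $\alpha$ represented by a line in $X$ (noting that the basic hypothesis also forces $X$ to be covered by lines, so $\alpha$ makes sense as a curve class on $X$ itself).

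The main technical obstacle is the Lefschetz step, because $\PP$ is singular and the classical Lefschetz hyperplane theorem does not apply directly. The approach (due to Dimca) is to work on the affine cone $\widetilde{X} \subset \CC^{n+1} \setminus \{0\}$ over $X$, where the ambient space is smooth, and to run a Morse-theoretic argument with the defining equations viewed as real functions, using quasi-smoothness of $X$ to control the vanishing cycles and descend the conclusion from the cone to the quotient. Since all of this is handled in \cite{Dim1, Dim2}, the proof of the proposition reduces to citing those results together with the elementary computation of $H_2(\PP, \mathbb{Z})$ described above.
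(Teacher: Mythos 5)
Your proposal is correct and follows essentially the same route as the paper, which gives no argument of its own and simply cites Dimca's Lefschetz-type theorem for (quasi-smooth) weighted complete intersections together with the computation of $H_2$ of the weighted projective space. Your write-up merely makes explicit the two ingredients behind that citation (the cell-by-cell computation of $H_2(\PP,\mathbb{Z})$ and the reduction to the affine cone), so there is nothing substantive to compare or correct.
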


We can finally proceed to the proof of Theorem \ref{thmA}:

\begin{proof}
By \cite[Theorem 1.3]{TZ}, $\mathrm{CH}_1(X)$ is generated by rational curves. Let $C$ be a rational curve; by Proposition \ref{homology}, $C$ is homologous to $N\alpha$ for some integer $N$.

On the other hand, assume that $C$ has class $m\alpha$, with $m \geq 2$, and consider a morphism $f: \PP^1 \to X$ such that $f_*([\PP^1])=m \alpha$. If $f(0)=p$ and $f(\infty)=q$, the expected dimension of $$\mathrm{Mor}(\PP^1,X; 0 \mapsto p, \infty \mapsto q)$$ at $[f]$ is $$\chi(\PP^1,T_X(-2))= m (\sum a_i - \sum d_j) - (n-c) \geq 2 (\sum a_i - \sum d_j) - (n-c) \geq 2,$$ so that we can use bend-and-break. Therefore $C$ is algebraically equivalent to a connected union of curves of degrees less than $m$. By iterating this process, $C$ is algebraically equivalent to a connected sum of curves with class $\alpha$. Finally, since every fiber of $ev_1^1$ is connected by Proposition 2.2, two curves with class $\alpha$ that meet at one point are algebraically equivalent. Therefore, $$\mathrm{CH}_1(X)_{hom}=\mathrm{CH}_1(X)_{alg}.$$
\end{proof}

\

\section{Canonical Bundle} 

In this section, $X$ will be a smooth projective variety of dimension $n$ with an ample line bundle $\OO_X(1)$. We will assume that $X$ contains a line $\alpha$ with respect to this an ample line bundle, i.e., $\alpha\cdot \OO_X(1)=1$. Let us fix the notation as follows.

Let $\F$ be a (non-empty) general fiber of $ev_2^2: \MM_{0,2}(X,2\alpha) \to X \times X$ over $p$ and $q$. Let $\pi: \U\rightarrow \F$ be the universal family with two universal sections $\se_0,\se_1: \F \rightarrow \U$ and the universal map $f:\U \rightarrow X$. Therefore, we have $\im (f\circ \se_0)=\{p\}$ and $\im (f\circ \se_1)=\{q\}$. Let $\w_{\U/\F}$ be the relative dualizing sheaf of $\U$ over $\F$, which we denote by $\w$. Denote the singular locus of $\pi$ by $\Z$. The locus $\Z$ is defined by the first fitting ideal of the relative differential sheaf $\Omega^1_{\U/\F}$, see \cite[Definition 5.1 and Lemma 5.2]{Pan1}.

\begin{hyp} \label{hypo} In the following of this section, we assume that
\begin{enumerate}
\item The restriction of $f$ to $\pi^{-1}(b)$ is an embedding conic (possibly singular) with respect to $\OO_X(1)$ for every $b\in \F$.
\item The moduli stack $\MM_{0,2}(X,2\alpha)$ is automorphism-free, smooth and of expected dimension $\chi(\pi^{-1}(b),N_{\pi^{-1}(b)/X})+2$ at every $b\in \F$, where $N_{\pi^{-1}(b)/X}$ is the normal bundle of $\pi^{-1}(b)$ in $X$. 

\item The boundary divisor $\Delta\subseteq \F$ which parametrizes broken-conics is smooth.
\end{enumerate}
\end{hyp}
Therefore, the fiber $\F$ is smooth and of pure dimension \begin{equation}\label{dimen}
\chi(\pi^{-1}(b),N_{\pi^{-1}(b)/X}) -2(n-1).
\end{equation} It follows from \cite[Proposition 5.4]{Pan1} that $\U$ is a smooth projective variety. Since the restriction of $f$ to $\pi^{-1}(b)$ is an embedding conic for every $b\in \F$, we have a short exact sequence
\begin{equation} \label{conormal}
0\rightarrow N^{\vee}_{\U/X} \rightarrow f^*\Omega_X \rightarrow \Omega _{\U/\F} \rightarrow 0
\end{equation} 
where $\Omega$ is the sheaf of differentials and $N^{\vee}_{\U/X} $ is the relative (locally free) conormal bundle. Again, as the stable map $f|_{\pi^{-1}(b)}$ (whose image is denoted by $\U_b$) is an embedding for every $b\in \F$, the first order deformation space of $f$ is given by $H^0(\U_b, N_{\U_b/X})$ (cf. \cite[Page 61]{GHS}) where $N_{\U_b/X}$ is the normal bundle. Therefore, the first order deformation space of $f$ fixing $p$ and $q$ is given by $H^0(\U_b, N_{\U_b/X}(-p-q))$ and the relative version gives the tangent space of $\F$, i.e., $T_{\F}= \pi_* (N_{\U/X}(-\se_0-\se_1 ))$.
\begin{lem} \label{cyrel} With the notations as above, we have that
\begin{enumerate}
\item $td_{\leq 2}(T_{\pi})= 1-\frac{1}{2}c_1(\w)+\frac{1}{12}\left((c_1(\w))^2+[\Z]\right)$, where $td_{\leq 2}$ denotes the sum of the Todd classes up to degree $2$.
\item $\ch_{\leq 2}(N_{\U/X})= (n-1)-f^*K_X +c_1(\w) +f^* \ch_2(\Omega_X)-\frac{1}{2} c_1(w)^2+[\Z]$, where $\ch_{\leq2}$ denotes the sum of the Chern characters up to degree $2$.
\item \[\begin{aligned}
\ch_{\leq 2}(N_{\U/X}(-\se_0-\se_1)) & =n-1 -(n-1)(\se_0+\se_1)-f^*K_X+c_1(\w)\\
&+(f^*K_X-c_1(\w))(\se_0+\se_1)+ \frac{n-1}{2}(\se_0+\se_1)^2\\
&+f^*\ch_2(\Omega_X)-\frac{1}{2}c_1(\w)^2+[\Z].
\end{aligned}\]

\end{enumerate}
\end{lem}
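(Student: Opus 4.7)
The plan is to treat the three parts in sequence, with the crucial input being a K-theoretic comparison between $\Omega_{\U/\F}$ and $\omega=\omega_{\U/\F}$ supported on the singular locus $\Z$. Because $\pi$ is a family of (possibly nodal) conics, the relative cotangent sheaf is locally free of rank $1$ away from $\Z$, while along $\Z$ it jumps rank. A local computation at a node (where $\U$ has the form $\operatorname{Spec} R[x,y]/(xy-t)$ and $\omega$ is generated by $dx/x=-dy/y$) yields the canonical short exact sequence
\begin{equation*}
0 \to \Omega_{\U/\F} \to \omega_{\U/\F} \to \mathcal{O}_\Z \to 0,
\end{equation*}
which I would first establish (the smoothness of $\Delta$ and the smoothness of $\U$ from Hypothesis~\ref{hypo} make $\Z$ a smooth codimension-two subscheme, so the right-hand term is well-defined). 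From this one reads off $\ch_{\leq 2}(\Omega_{\U/\F}) = c_1(\w) + \tfrac{1}{2}c_1(\w)^2 - [\Z]$, using that $\ch_{\leq 2}(\mathcal{O}_\Z)=[\Z]$ since $\Z$ is codimension $2$.

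For (1), I would compute $td_{\leq 2}(T_\pi)$ by viewing $T_\pi$ in $K$-theory as the derived dual of $\Omega_{\U/\F}$. Dualizing the short exact sequence gives, on the level of $K$-theory, $[T_\pi] = [\omega^\vee] - [\mathcal{E}xt^2(\mathcal{O}_\Z,\mathcal{O})] = [\omega^\vee]-[\det N_{\Z/\U}]$ (pushed to $\U$). Using Whitney multiplicativity to turn this into Chern classes, $c_1(T_\pi) = -c_1(\w)$ (the correction from $\Z$ lives in codimension $\geq 2$) and $c_2(T_\pi) = [\Z]$ (comparing $\ch_2$ of a codimension-two sheaf with its $c_2$). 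Substituting into the universal formula $td = 1 + c_1/2 + (c_1^2+c_2)/12 + \cdots$ produces the stated expression.

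For (2), I would apply $\ch_{\leq 2}$ to the conormal sequence (\ref{conormal}), giving $\ch_{\leq 2}(N^\vee_{\U/X}) = \ch_{\leq 2}(f^*\Omega_X) - \ch_{\leq 2}(\Omega_{\U/\F})$. Using the value of $\ch_{\leq 2}(\Omega_{\U/\F})$ from the opening paragraph, this becomes
\begin{equation*}
(n-1) + f^*K_X - c_1(\w) + f^*\ch_2(\Omega_X) - \tfrac{1}{2}c_1(\w)^2 + [\Z].
\end{equation*}
Dualizing (i.e. using $\ch_k(E^\vee) = (-1)^k \ch_k(E)$) flips the degree-$1$ piece and yields the formula in (2). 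Finally, (3) is a mechanical consequence: since $\ch(E \otimes L) = \ch(E)\cdot\ch(L)$, I multiply the expression from (2) by
\begin{equation*}
\ch(\OO(-\se_0-\se_1))_{\leq 2} = 1-(\se_0+\se_1) + \tfrac{1}{2}(\se_0+\se_1)^2
\end{equation*}
and truncate in degree $\leq 2$, collecting terms to match the three displayed lines.

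The genuinely nontrivial step is the first one: justifying the exact sequence $0\to\Omega_{\U/\F}\to\w\to\mathcal{O}_\Z\to 0$ and, for (1), the derived-dual identification of $T_\pi$ with $\omega^\vee$ modulo a contribution from $\det N_{\Z/\U}$. Once those structural facts are in place, the rest is formal manipulation of Chern characters. The paper's citation of \cite[Proposition 5.4]{Pan1} and \cite[Lemma 5.2]{Pan1} suggests this is exactly where the relevant local descriptions of $\Z$ and the smoothness of $\U$ enter.
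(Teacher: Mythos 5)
Your proposal is correct and follows essentially the same route as the paper: parts (2) and (3) are the identical formal manipulation of the conormal sequence (\ref{conormal}) together with $\ch(E\otimes L)=\ch(E)\cdot\ch(L)$, and the only substantive inputs — $\ch_{\leq 2}(\Omega_{\U/\F})=1+c_1(\w)+\tfrac{1}{2}c_1(\w)^2-[\Z]$ and the resulting Todd class of $T_\pi$ — are exactly what the paper imports from \cite[Lemmas 5.6 and 5.7]{Pan1}, which you instead rederive from the sequence $0\to\Omega_{\U/\F}\to\w\to\OO_{\Z}\to 0$. The one point to be careful about is the left-exactness of that sequence (equivalently, the vanishing of the torsion of $\Omega_{\U/\F}$ along $\Z$): this is precisely where the smoothness of $\U$ and of $\Delta$ enters (over a node whose smoothing parameter is a zerodivisor the map $\Omega_{\U/\F}\to\w$ acquires a length-one kernel, which would cancel the $[\Z]$ term), so the local computation you gesture at is the step that genuinely uses Hypothesis \ref{hypo}.
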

\begin{proof}
The assertion (1) follows from \cite[Lemma 5.6 and Lemma 5.7]{Pan1}. 
For (2), we have
\[
\begin{aligned}
\ch_{\leq 2} & =\mathrm{rank}(N_{\U/X})-\ch_1(N_{\U/X}^{\vee})+\ch_2(N_{\U/X}^{\vee} )\\
&=n-1-c_1(f^*\Omega_X-\Omega_{\U/\F})+\ch_2(f^*\Omega_X)-\ch_2(\Omega_{\U/\F})\\
&=n-1-f^*K_X+c_1(\w)+\ch_2(f^*\Omega_X)-\frac{1}{2}c_1(w)^2+[\Z]
\end{aligned}
\]
where the second identity follows from the exact sequence (\ref{conormal}) and the last identity follows from \cite[Lemma 5.7]{Pan1}.

Note that $\ch_{\leq 2}(\OO_{\U}(-\se_0-\se_1))=1-\se_0-\se_1+\frac{1}{2}(\se_0+\se_1)^2$.
The assertion (3) follows from (2) and \[\ch(N_{\U/X}(-\se_0-\se_1))=\ch(N_{\U/X})\cdot \ch(\OO_{\U}(-\se_0-\se_1)).\]
\end{proof}

\begin{lem} \label{dual}
With the same notation as above, we have that
\[c_1(\w)=-\se_0-\se_1.\]
\end{lem}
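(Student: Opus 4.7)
The strategy is to prove the stronger statement that the line bundle $\mathcal{L} := \w \otimes \OO_\U(\se_0 + \se_1)$ is isomorphic to $\OO_\U$; passing to first Chern classes then gives the lemma.

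The first step is to establish an adjunction identity along each section. Since $\F$ is smooth, $\se_0$ embeds $\F$ as a smooth Cartier divisor in the smooth variety $\U$ with $\omega_{\se_0(\F)/\F} \cong \OO_\F$. Adjunction on $\se_0(\F) \subset \U$, combined with $\omega_\U \cong \w \otimes \pi^* \omega_\F$ and $\pi \circ \se_0 = \id_\F$, yields after cancellation of $\omega_\F$ the identity $\se_0^*(\w \otimes \OO_\U(\se_0)) \cong \OO_\F$. Because the two universal sections correspond to distinct marked points on every fiber, they are disjoint as divisors in $\U$, so $\se_0^*\OO_\U(\se_1) \cong \OO_\F$, and altogether $\se_0^*\mathcal{L} \cong \OO_\F$ (and symmetrically for $\se_1^*$).

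Next I would verify that $\mathcal{L}$ restricts to the trivial bundle on every geometric fiber of $\pi$. For a smooth conic fiber $\U_b \cong \PP^1$, the degree is $-2+2=0$, so $\mathcal{L}|_{\U_b}$ is trivial. For a broken-conic fiber $\U_b = C_0 \cup C_1$, the standard computation of the dualizing sheaf of a nodal curve gives $\w|_{C_i} \cong \OO_{C_i}(-1)$, while the two universal sections meet opposite components, so $\mathcal{L}|_{\U_b}$ has multi-degree $(0,0)$ and hence is trivial.

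With fiberwise triviality in hand, cohomology and base change applied to the flat family of connected, arithmetic-genus-zero curves $\pi : \U \to \F$ produces a line bundle $L := \pi_*\mathcal{L}$ on $\F$ together with an isomorphism $\pi^*L \xrightarrow{\sim} \mathcal{L}$. To identify $L$, pull back by $\se_0$: using $\pi \circ \se_0 = \id_\F$ and the adjunction identity from the first step, $L \cong \se_0^*\pi^*L \cong \se_0^*\mathcal{L} \cong \OO_\F$, so $\mathcal{L} \cong \pi^*\OO_\F = \OO_\U$ and therefore $c_1(\w) = -\se_0 - \se_1$ in $\CH^1(\U)$. The main obstacle I expect is the multi-degree verification on broken-conic fibers: one needs to ensure that the two marked points lie on distinct components of every singular fiber parametrized by $\F$, a constraint on the $2$-pointed stable map structure that rests on the geometric hypotheses governing the choice of $\F$.
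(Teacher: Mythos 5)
Your proposal is correct and follows essentially the same route as the paper: both arguments show $\w(\se_0+\se_1)$ is trivial on every fiber of $\pi$ (hence is $\pi^*L$ for some line bundle $L$ on $\F$) and then identify $L\cong\OO_\F$ by restricting along $\se_0$, your adjunction computation being equivalent to the paper's identification $\se_0^*\OO_\U(\se_0)\cong N_{\se_0/\U}\cong T_{\U/\F}|_{\se_0}$. Your extra care with the multi-degree on broken-conic fibers (using that the two sections land on distinct components) is a point the paper's proof passes over silently, but it is guaranteed by the setup since $\F$ meets only the boundary component $D(1,1,\alpha,\alpha)$.
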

\begin{proof}
It suffices to show that $\w(\se_0+\se_1)$ is trivial. In fact, for any point $b\in \F$, we have that the dualizing sheaf $\w_{\U_b}$ of $\U_b$ is $\OO_{\U_b}(-2)$. Therefore, the restriction $\w(\se_0+\se_1)|_{\U_b}=\w_{\U_b}(p+q)$ is trivial. This implies that there exists a line bundle  $L$ on $\F$ such that $\w(\se_0+\se_1)=\pi^*L$. Therefore, the line bundle $L=\se_0^*(\w(\se_0+\se_1))$ is equal to
\[
\begin{aligned}
\se_0^*\w \otimes \se_0^* \OO_{\U}(\se_0) &=\se_0^*(\Omega_{\U/\F})\otimes \se_0^*(N_{\se_0/\U}) \\
 &= \se_0^*(\Omega_{\U/\F})\otimes \se_0^*(T_{\U/\F}) \\
 &=\OO_{\F} 
\end{aligned}
\]
where $N_{\se_0/\U}$ is the normal bundle of $\se_0$ in $\U$. Here we are using the fact that $\se_0$ is disjoint with $\se_1$, and that $N_{\se_0/\U}=(T_{\F/\U})|_{\se_0}$.
\end{proof}

\begin{lem} \label{van}
With the same notation as above, we have $$R^i\pi_*(N_{\U/X}(-\se_0-\se_1 ))=0$$ for $i\geq 1.$
\end{lem}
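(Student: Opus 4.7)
The plan is to apply cohomology and base change. Since $\pi:\U\to\F$ is proper with smooth total space (by Hypothesis \ref{hypo} and \cite[Proposition 5.4]{Pan1}) and $1$-dimensional fibers, the higher direct images $R^i\pi_*\mathcal{G}$ of any coherent sheaf vanish for $i\geq 2$ for purely dimensional reasons. So the real content of the statement is the case $i=1$, and by Grauert's theorem / the semicontinuity theorem it suffices to show the pointwise vanishing $H^1(\U_b,N_{\U_b/X}(-p-q))=0$ at every $b\in \F$, where $p=\se_0(b)$ and $q=\se_1(b)$ are the marked points on $\U_b$.

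To get this vanishing, I would invoke the deformation theory of the moduli stack, in the same spirit as the discussion preceding the lemma. Because $f|_{\U_b}$ is an embedding (Hypothesis \ref{hypo}(1)), the tangent space to $\F$ at $b$ equals $H^0(\U_b,N_{\U_b/X}(-p-q))$, and the obstructions to deforming $f$ with $p$ and $q$ fixed lie in $H^1(\U_b,N_{\U_b/X}(-p-q))$. By Hypothesis \ref{hypo}(2), the stack $\MM_{0,2}(X,2\alpha)$ is smooth at $b$ of expected dimension $\chi(\U_b,N_{\U_b/X})+2$; subtracting $2n=\dim(X\times X)$ shows $\F$ is smooth at $b$ of dimension $\chi(\U_b,N_{\U_b/X})-2(n-1)=\chi(\U_b,N_{\U_b/X}(-p-q))$, matching formula (\ref{dimen}). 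Smoothness then gives $h^0(\U_b,N_{\U_b/X}(-p-q))=\dim T_b\F=\dim \F=\chi(\U_b,N_{\U_b/X}(-p-q))=h^0-h^1$, which forces $h^1=0$.

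The only delicate point I expect is when $\U_b$ is a broken (nodal) conic: one must be sure that the identification of the tangent and obstruction spaces of $\F$ with $H^0$ and $H^1$ of $N_{\U_b/X}(-p-q)$ is still valid. This is fine because $f|_{\U_b}$ is an embedding (so $\U_b$ is a local complete intersection in the smooth variety $X$, and $N_{\U_b/X}$ is locally free, as is already manifest from the conormal sequence (\ref{conormal})), and the standard deformation theory of embedded stable maps with fixed images of marked points then applies verbatim; the relevant references are \cite[Page 61]{GHS} and \cite[Proposition 5.4]{Pan1}.
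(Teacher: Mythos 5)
Your proposal is correct and takes essentially the same route as the paper's proof: reduce to the fiberwise vanishing of $H^1(\U_b, N_{\U_b/X}(-p-q))$, use Riemann--Roch together with the dimension formula (\ref{dimen}) to get $\chi(\U_b,N_{\U_b/X}(-p-q))=\dim(\F)$, and then use smoothness of $\F$ to identify $h^0$ with $\dim(\F)$, which forces $h^1=0$. Your added remarks on Grauert/base change and on the nodal fibers are just slightly more explicit versions of steps the paper leaves implicit.
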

\begin{proof}
For $i\geq 2$, it is obvious. To show $R^1\pi_*(N_{\U/X}(-\se_0-\se_1 ))=0$, it suffices to show that $H^1(\U_b, N_{\U_b/X}(-p-q))=0$ for every $b\in \F$. By the Riemann-Roch theorem for vector bundles on cuves and (\ref{dimen}), we have 
\[\chi(\U_b,N_{\U_b/X}(-p-q)) =\chi(\pi^{-1}(b),N_{\pi^{-1}(b)/X}) -2(n-1)=\dim(\F).\]
Note that $\F$ is smooth and the tangent space to $\F$ at the point $b$ is given by the first order deformation space $H^0(\U_b, N_{\U_b/X}(-p-q))$. Therefore, we have that $$\dim(\F)=\dim H^0(\U_b, N_{\U_b/X}(-p-q)).$$ It follows that $H^1(\U_b, N_{\U_b/X}(-p-q))=0$.
\end{proof}

\begin{prop}\label{canonical} With the same notation as above, we have
$$-K_{\F}=\pi_*f^*(\ch_2(\Omega_X))+\Delta$$
in $\CH(\F)^1_{\mathbb{Q}}$.
\end{prop}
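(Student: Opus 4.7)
The overall strategy is to apply Grothendieck--Riemann--Roch to $\pi:\U\to\F$ and the bundle $N_{\U/X}(-\se_0-\se_1)$, whose pushforward is $T_\F$ and whose higher direct images vanish by Lemma~\ref{van}. This gives
\[
\ch(T_\F)=\pi_*\bigl(\ch(N_{\U/X}(-\se_0-\se_1))\cdot td(T_\pi)\bigr),
\]
and since $\pi$ has relative dimension $1$, the class $-K_\F=c_1(T_\F)$ is obtained by extracting the degree-$2$ part of this product on $\U$, using the expansions from Lemma~\ref{cyrel}, and then pushing forward to $\F$.

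Multiplying out and substituting $c_1(\w)=-s$ with $s:=\se_0+\se_1$ (Lemma~\ref{dual}), a straightforward bookkeeping computation shows the degree-$2$ component equals
\[
\tfrac{1}{2}f^*K_X\cdot s+\tfrac{n-1}{12}s^2+f^*\ch_2(\Omega_X)+\tfrac{n+11}{12}[\Z].
\]
Three facts are then used to push this down. First, because $f\circ\se_i$ is the constant map to $p$ (respectively $q$), the pullback $\se_i^*f^*K_X$ vanishes and hence $\pi_*(f^*K_X\cdot s)=0$. Second, $\Z$ is generically the reduced union of nodes of broken conics and $\Z\to\Delta$ is birational, so $\pi_*[\Z]=[\Delta]$. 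Third, and this is the key auxiliary identity, $\pi_*(s^2)=\pi_*c_1(\w)^2=-[\Delta]$: apply GRR to the structure sheaf $\OO_\U$, noting that $\pi_*\OO_\U=\OO_\F$ and $R^1\pi_*\OO_\U=0$ by the genus-$0$ hypothesis on the fibers; comparing the degree-$1$ parts then forces $\pi_*\bigl(\tfrac{1}{12}(c_1(\w)^2+[\Z])\bigr)=0$, which is the nodal-curve Mumford--Noether relation.

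Assembling these, the $n$-dependent coefficients telescope as $\tfrac{n-1}{12}(-[\Delta])+\tfrac{n+11}{12}[\Delta]=[\Delta]$, yielding
\[
-K_\F=\pi_*f^*\ch_2(\Omega_X)+[\Delta],
\]
as desired. The principal obstacle is securing the auxiliary identity $\pi_*c_1(\w)^2=-[\Delta]$ together with the compatibility $\pi_*[\Z]=[\Delta]$; both reduce to handling the singularities of $\pi$ at broken conics with care, after which the remainder is routine coefficient-chasing.
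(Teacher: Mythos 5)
Your proposal is correct and follows essentially the same route as the paper: apply Grothendieck--Riemann--Roch to $N_{\U/X}(-\se_0-\se_1)$ along $\pi$, use Lemmas \ref{cyrel}, \ref{dual}, \ref{van}, kill the $f^*K_X\cdot(\se_0+\se_1)$ term because $f\circ\se_i$ is constant, and finish with $\pi_*[\Z]=\Delta$ and $\pi_*c_1(\w)^2=-\Delta$; your degree-$2$ expression and the telescoping of the $n$-dependent coefficients agree exactly with the paper's $A+B+C$ computation. The only cosmetic difference is that you rederive the identity $\pi_*\bigl(c_1(\w)^2+[\Z]\bigr)=0$ from GRR for $\OO_\U$, where the paper simply cites \cite[Lemma 6.4]{Pan1}.
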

\begin{proof}
Since $\U$ and $\F$ are smooth projective varieties, we can apply the Grothendieck-Riemann-Roch theorem to the morphism $\pi : \U\rightarrow \F$. In general, we have
\[\pi_*\left(\ch(\mathcal{G})\cdot td(T_{\pi})\right)=\ch(\pi_{!}(\mathcal{G}))\] where $\mathcal{G}$ is a coherent sheaf on $\U$ and $\pi_{!}\mathcal{G}=\sum \limits_{i=0}^1 (-1)^i R^i\pi_*(\mathcal{G}).$ It follows from Lemma \ref{van} and Lemma \ref{cyrel} (1), (3) that
\[\begin{aligned}
\ch_1(T_{\F})&=\ch_1\left(\pi_*(N_{\U/X}(-\se_0-\se_1))\right) \\
&=\pi_*\left[ \left( \ch(N_{\U/X}(-\se_0-\se_1))\cdot td(T_{\pi})\right)_2 \right]
&=\pi_*(A+B+C),
\end{aligned}\]
where 
\[
\begin{aligned}
A&=(f^*K_X-c_1(\w))(\se_0+\se_1)+ \frac{n-1}{2}(\se_0+\se_1)^2\\
&+f^*\ch_2(\Omega_X)-\frac{1}{2}c_1(\w)^2+[\Z],\\
B&=\frac{n-1}{12}(c_1(\w)^2+[\Z]),\\
C&=\frac{n-1}{2}(\se_0+\se_1)c_1(\w)+\frac{1}{2}f^*K_X\cdot c_1(\w)-\frac{1}{2}c_1(\w)^2.
\end{aligned}
\]
Since $\pi_*([\Z])=\Delta$ by \cite[Proposition 5.3]{Pan1}, and $\Delta+\pi_*\left(c_1(\w)^2\right)=0$ by \cite[Lemma 6.4]{Pan1}, we have that $\pi_*B=0$.
Using Lemma \ref{dual}, we get
 \[A+C=\frac{1}{2}f^* K_X\cdot (\se_0+\se_1)+f^*\ch_2(\Omega_X^1)+[\Z].\]
Note that, since $\im(f\circ \se_i)$ is a point, $f^*K_X\cdot \se_i=0$ for $i=0,1$. 
It follows that
\[-K_{\F}=\pi_*f^*(\ch_2(\Omega_X))+\Delta.\]
\end{proof}

\begin{rmk}
\normalfont
Our method could give a general formula for $\ch_k(\F)$ for any $k$, similar to the formula in \cite[Proposition 1.1]{AC}, but the calculation would be tedious. For the purpose of this paper, the formula for $k=1$ is sufficient.
\end{rmk}

\section{Fano Families of conics and 1-cycles}

Throughout this section, we will assume that $X$ is a smooth, $2$-Fano, non-linear, projective variety with an ample line bundle $\OO_X(1)$. Let $\alpha$ be a curve class of $\OO_X(1)$-degree $1$. Out of convenience, we will call \enquote{lines} and \enquote{conics} curves with classes $\alpha$ and $2\alpha$ respectively.

\

Let $B$ be the full boundary divisor of $\MM_{0,2}(X, 2\alpha)$, which has multiple irreducible components. As customary, we can denote the components of $B$ by $D(a,b,i \alpha,j \alpha)$, where $a$, $b$ are the number of points on the first and second component respectively of the domain of the stable map, and $i \alpha$, $j \alpha$ are the curve classes of the stable map restricted to the first and second component respectively. Then $B$ is the union of $D(1,1,\alpha,\alpha)$, $D(2,0,\alpha,\alpha)$ and $D(2,0,0,2\alpha)$.

\

Now let $\F$ be a general fiber of $ev_2^2: \MM_{0,2}(X, 2\alpha) \to X \times X$ over a general point $(p, q) \in X \times X$. Since $p$ and $q$ are distinct, $\F$ cannot intersect $D(2,0,0,2\alpha)$. Since $p$ and $q$ are general (and $X$ is not linear), there is no line through $p$ and $q$, which means that $\F$ cannot intersect $D(2,0,\alpha,\alpha)$. Therefore $\F$ intersects only one irreducible component of the full boundary $B$, and this component is $D(1,1,\alpha,\alpha)$, which we will denote from now on by $\MM_{0,2}(X,\PP^1\vee \PP^1)$.

\

We will assume furthermore that $X$ is \em broken-conic-connected\em, i.e., that the restriction of the natural evaluation map $ev_2^2: \MM_{0,2}(X,2\alpha) \to X \times X$ to $\MM_{0,2}(X,\PP^1\vee \PP^1)$ is dominant.

\

A central role will be played by $2-$pointed conics on $X$, which are rather special. Namely, the class $2\alpha$ is \enquote{$2-$minimal}, meaning that there are no curves on $X$ through $2$ general points of lower degree. One can define more generally $n-$minimal curves, which have been studied by de Jong and Starr, and which enjoy good properties (see \cite{dJS06a}, section $5$). 

\begin{prop}\label{checkhyp}
Let $X$ be a smooth, $2$-Fano, non-linear, projective variety which is broken-conic-connected. Consider the evaluation map $ev_2^2: \MM_{0,2}(X,2\alpha) \to X \times X$ and let $\F$ its fiber over a general point $(p,q) \in X \times X$. Then:

(1) $\F$ is smooth, it parametrizes automorphism free stable maps every component of which is free, and $\MM_{0,2}(X,2\alpha)$ is smooth of the expected dimension at every point of $\F$;

(2) the intersection of $\F$ with the boundary divisor is a smooth divisor.
\end{prop}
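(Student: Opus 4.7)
My plan is to verify each clause of the proposition---automorphism-freeness of stable maps in $\F$, freeness of each component, smoothness of $\MM_{0,2}(X,2\alpha)$ and of $\F$, and smoothness of the boundary divisor---via standard deformation theory of Kontsevich stable maps, using generality of $(p,q)$, broken-conic-connectedness and the $2$-Fano positivity. For automorphism-freeness, let $(C,p_1,p_2,f)\in \F$. If $C\simeq \PP^1$, then since $X$ is non-linear and $(p,q)$ is general, $f$ is non-constant and in fact birational onto its image, so any automorphism of $\PP^1$ fixing the two marked points and intertwining $f$ with itself must be the identity. If $C$ is reducible, the generality of $(p,q)$ excludes the boundary components $D(2,0,\alpha,\alpha)$ and $D(2,0,0,2\alpha)$ (as noted just before the proposition), so the two marked points must lie on different line-components of $C$; then any stable-map automorphism fixes the node together with the marked point on each component, giving three fixed points on each $\PP^1$ and hence the identity.

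\textbf{Freeness and smoothness of $\MM_{0,2}(X,2\alpha)$.} Broken-conic-connectedness, combined with smoothing of broken conics (which gives dominance of $ev_2^2$ on the smooth-conic locus as well), implies via generic smoothness that a general smooth conic through $(p,q)$ has free normal bundle, i.e., $N_{C/X}$ splits on $\PP^1$ with non-negative summands. For a general broken conic $C=C_1\cup C_2$ through general $(p,q)$, each line $C_i$ passes through a general point of $X$ and has the minimal class $\alpha$; the Fano hypothesis together with general position forces each $C_i$ to be free as well. With each component free, the obstruction $H^1(C,f^*T_X)$ vanishes (computed by normalization in the nodal case), so $\MM_{0,2}(X,2\alpha)$ is smooth of the expected dimension at every $[f]\in\F$. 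Freeness moreover makes the differential of $ev_2^2$ at the two marked points surjective, so $\F$ is a smooth general fiber.

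\textbf{Boundary smoothness.} Étale-locally, the boundary component $D(1,1,\alpha,\alpha)$ is identified with the fiber product $\MM_{0,2}(X,\alpha)\times_X \MM_{0,2}(X,\alpha)$, where the two marked nodes are glued along their common image in $X$. By the previous paragraph both factors are smooth at a general point and the node-evaluations are submersions (by freeness of each line), so the fiber product is smooth. The remaining two evaluations give $ev_2^2$ on this boundary component; again they are jointly submersive by freeness at the two un-glued marked points. Hence the fiber of this restricted $ev_2^2$ over the general $(p,q)$---which is exactly $\Delta = \F \cap D(1,1,\alpha,\alpha)$---is smooth, and its codimension in $\F$ is $1$ by an expected-dimension count.

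\textbf{Main obstacle.} The most delicate point is the freeness assertion: broken-conic-connectedness supplies dominance of only one particular boundary component of $\MM_{0,2}(X,2\alpha)$, and one must carefully transfer this to freeness of a general line through a general point and of a general smooth conic through a general pair of points, identifying the right irreducible component and applying generic smoothness while ruling out negative summands in the normal bundle via the Fano and $2$-Fano positivity. Once freeness is in hand, the smoothness statements and the boundary analysis follow by formal deformation-theoretic arguments as sketched above.
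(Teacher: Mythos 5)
Your overall skeleton matches the paper's proof (automorphism-freeness from degree-one components, freeness of components, then \cite[Theorem 2]{FP} plus generic smoothness for $\F$, and a generic-smoothness argument on the boundary component for $\Delta$), but there is a genuine gap in the central freeness step, and it matters because everything downstream quantifies over \emph{every} point of $\F$. You only establish freeness for a \emph{general} smooth conic through $(p,q)$ and for a \emph{general} broken conic, via generic smoothness of $ev_2^2$ and an appeal to ``the Fano hypothesis together with general position.'' That is the wrong quantifier: the proposition asserts that \emph{every} stable map parametrized by $\F$ has all components free, and this universal statement is exactly what is needed later (Hypothesis 3.1 imposes the deformation-theoretic conditions at every $b\in\F$) and inside your own boundary argument (you must know that every point of $\F\cap D(1,1,\alpha,\alpha)$ lies in the union-of-free-curves locus before generic smoothness of the restricted evaluation says anything about the fiber over $(p,q)$). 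Generic smoothness of an evaluation map cannot rule out non-free members sitting at special points of the general fiber.

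The fix, which is the paper's actual argument, is simpler and stronger: every component of every stable map in $\F$ passes through $p$ or through $q$ (a smooth conic through both; each line of a broken conic through one of them), and $p$, $q$ are general points of $X$; in characteristic zero any rational curve through a general point of a smooth variety is free. This needs neither the Fano nor the $2$-Fano hypothesis and applies uniformly to all members of the fiber, not just general ones. A secondary inaccuracy: freeness of a curve ($f^*T_X$ with all summands $\geq 0$) does not by itself make the differential of $ev_2^2$ surjective at two marked points (that would require all summands $\geq 1$); the paper instead obtains smoothness of $\F$ by applying generic smoothness to $ev_2^2$ restricted to the open locus of unions of free curves and using that $(p,q)$ is general. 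Your remaining steps --- automorphism-freeness via degree-one components, the identification of $D(1,1,\alpha,\alpha)$ as the only boundary component meeting $\F$, and the fiber-product description of that component --- are correct and consistent with the paper.
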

\begin{proof}
(1) Since $p$ and $q$ are general points of $X$ and in particular distinct, a point $[C,f,p_1,p_2]$ of $\F$ can only parametrize either a stable map of degree $1$ from $\PP^1$ to a conic, or a stable map of degree $1$ from a tree of two copies of $\PP^1$ with one marked point on each component to two lines meeting at one point. In either case, since every component of $C$ has degree $1$ over its image, $[C,f,p_1,p_2]$ is automorphism free. Also, since in either case the image of every component of $C$ passes through a general point if $X$, every component of $[C,f,p_1,p_2]$ is free.

\

By \cite[Theorem 2]{FP}, this implies that $\MM_{0,2}(X,2\alpha)$ is smooth of the expected dimension at every point of $\F$. Furthermore, if $\mathcal{U} \subset \MM_{0,2}(X,2\alpha)$ is the open subset parametrizing unions of free curves, by Generic Smoothness we have that $ev_2^2|_{\mathcal{U}}$ is smooth. On the other hand, $\F$ is contained in the fiber of $ev_2^2|_{\mathcal{U}}$ over $(p,q)$; thus $\F$ is smooth.

\

(2) Let $\epsilon: \MM_{0,2}(X,\PP^1\vee \PP^1) \to X \times X$ be the restriction of $ev_2^2$ to $\MM_{0,2}(X,\PP^1\vee \PP^1)$, which is an irreducible component of the boundary divisor of $\MM_{0,2}(X,2\alpha)$, and the only component of the boundary divisor that $\F$ intersects. Since $X$ is broken-conic-connected, $\epsilon$ is dominant. Let $\mathcal{V} \subset \MM_{0,2}(X,\PP^1\vee \PP^1)$ be the open subset parametrizing unions of free curves. By Generic Smoothness, the restriction of $\epsilon$ to $\mathcal{V}$ is smooth. On the other hand, every point in $\F \cap \MM_{0,2}(X,\PP^1\vee \PP^1)$ parametrizes a union of free curves; therefore, $\F \cap \MM_{0,2}(X,\PP^1\vee \PP^1) \subset \mathcal{V}$, which implies that $\F \cap \MM_{0,2}(X,\PP^1\vee \PP^1)$ is smooth.
\end{proof}

We will denote the smooth divisor $\F \cap \MM_{0,2}(X,\PP^1\vee \PP^1)$ by $\Delta$.

\begin{lem}
\label{fanolemma}
Let $X$ be a smooth, $2$-Fano, non-linear, projective variety which is broken-conic-connected. Assume that the divisor $\Delta$ is connected. Then $\Delta$ is Fano.
\end{lem}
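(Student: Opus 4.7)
The plan is to combine adjunction with the canonical-bundle formula of Proposition~\ref{canonical} to rewrite $-K_\Delta$ as the pushforward of a positive class, and then to exploit the 2-Fano positivity $\ch_2(T_X)>0$ to verify ampleness.

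First, adjunction gives $K_\Delta = (K_\F + \Delta)|_\Delta$, so
\[
-K_\Delta \;=\; (-K_\F - \Delta)|_\Delta.
\]
Substituting the formula $-K_\F = \pi_* f^*(\ch_2(\Omega_X)) + \Delta$ from Proposition~\ref{canonical}, the $\Delta$ contribution cancels and we obtain
\[
-K_\Delta \;=\; \bigl(\pi_* f^*\ch_2(\Omega_X)\bigr)\big|_\Delta \;=\; (\pi_\Delta)_* f_\Delta^* \ch_2(T_X),
\]
where $\pi_\Delta\colon \U_\Delta \to \Delta$ and $f_\Delta\colon \U_\Delta \to X$ are the restrictions of the universal family and universal morphism to $\U_\Delta := \pi^{-1}(\Delta)$, and I have used $\ch_2(\Omega_X)=\ch_2(T_X)$.

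Next I would test positivity on curves. For any irreducible curve $C\subset\Delta$, the projection formula yields
\[
-K_\Delta\cdot C \;=\; \ch_2(T_X)\cdot (f_\Delta)_*\bigl[\pi_\Delta^{-1}(C)\bigr].
\]
The surface $\pi_\Delta^{-1}(C)\subset \U_\Delta$ cannot be collapsed to a lower-dimensional subscheme by $f_\Delta$: Proposition~\ref{checkhyp} guarantees that the stable maps parametrized by $\Delta$ are automorphism-free and carry two marked points fixed at the distinct general points $p,q$, so the tautological map $\Delta\to \mathrm{Chow}(X)$ sending a broken conic to its image is set-theoretically injective. Consequently $(f_\Delta)_*[\pi_\Delta^{-1}(C)]$ is a positive multiple of an effective 2-cycle in $X$, and 2-Fano positivity of $\ch_2(T_X)$ against effective surfaces yields $-K_\Delta\cdot C > 0$.

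The main obstacle is upgrading positivity on curves (which only guarantees nefness) to full ampleness. I would proceed by a Nakai--Moishezon induction on $\dim V$ for irreducible subvarieties $V\subset\Delta$: the restriction has the same shape $-K_\Delta|_V = (\pi_V)_* f_V^*\ch_2(T_X)$, where $\pi_V,f_V$ denote the further restrictions to $\U_V := \pi_\Delta^{-1}(V)$, and the iterated projection formula converts the top self-intersection $(-K_\Delta|_V)^{\dim V}$ into an intersection number on $X$ pairing $\ch_2(T_X)$ (possibly decorated by classes coming from the fibration $\pi_V$) against the image cycle $(f_V)_*[\U_V]$. The delicate part is to verify that at each stage of the induction these cycles on $X$ remain positive combinations of effective cycles of the expected dimension, so that the 2-Fano hypothesis continues to give strict positivity; the automorphism-free structure from Proposition~\ref{checkhyp} and the standing assumption that $\Delta$ is connected should be precisely what rules out dimensional collapse of the iterated pushforwards, but bookkeeping this carefully across all $V$ simultaneously is the technical heart of the argument.
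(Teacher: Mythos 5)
Your first two steps coincide exactly with the paper's proof: adjunction gives $-K_\Delta = (-K_\F - \Delta)|_\Delta$, and substituting the formula of Proposition~\ref{canonical} cancels the boundary term, leaving $-K_\Delta = \bigl(\pi_* f^*\ch_2(\Omega_X)\bigr)|_\Delta$. Where you diverge is the final step. The paper concludes in one line by citing Araujo--Castravet (\cite[Lemma 2.7]{AC}), which asserts that the operation $\pi_* f^*$ preserves ampleness in this setting; combined with the $2$-Fano hypothesis this immediately gives that $-K_\Delta$ is ample. You instead try to prove ampleness from scratch via Nakai--Moishezon, and that is where the gap lies.

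Concretely, your projection-formula argument, even granting that $(f_\Delta)_*[\pi_\Delta^{-1}(C)]$ is a nonzero effective $2$-cycle for every curve $C\subset\Delta$, only shows that $-K_\Delta$ has positive degree on every curve, i.e.\ that it is strictly nef. Strictly nef divisors need not be ample (Mumford's classical example of a strictly nef, non-ample divisor on a ruled surface), so the Nakai--Moishezon inequalities $(-K_\Delta|_V)^{\dim V}>0$ for higher-dimensional $V$ are genuinely needed and are not a formality. You acknowledge this yourself by labelling the verification of these inequalities ``the technical heart of the argument'' and leaving it as a sketch: the iterated pushforwards $(f_V)_*[\U_V]$ can a priori drop dimension or fail to pair against $\ch_2(T_X)$ in a way controlled by the $2$-Fano condition alone, and injectivity of $\Delta\to\mathrm{Chow}(X)$ does not by itself control the self-intersection numbers. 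The missing content is precisely what \cite[Lemma 2.7]{AC} supplies, so as written the proposal does not constitute a complete proof; either carry out the Nakai--Moishezon induction in detail or invoke the cited lemma.
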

\begin{proof}
Recall that we have morphisms:
\begin{center}
\begin{tikzcd}\mathcal{U} \arrow{d}{\pi}  \arrow{r}{f} 
&X \\
\F
\end{tikzcd}
\end{center}
where $\mathcal{U}$ is the universal family over the general fiber $\F$ of $ev_2^2$.

By Proposition \ref{checkhyp}, Hypothesis \ref{hypo} is satisfied. It follows from Proposition \ref{canonical} that $$-K_\F=\pi_*f^*(\ch_2(X))+\Delta.$$ By adjunction, $$-K_{\Delta}=(-K_{\F}-\Delta)|_{\Delta}= (\pi_*(f^* \ch_2(X)))|_{\Delta}.$$ By \cite[Lemma 2.7]{AC}, $\pi_*f^*$ preserves ampleness; therefore, since $X$ is 2-Fano, we have that $\Delta$ is Fano.
\end{proof}

We will now construct a Fano fibration $\phi:Y \to X$ (by which we mean a morphism whose general fiber is Fano), which we will use to compute the first Chow group of $X$.

\begin{prop}
Let $X$ be a smooth, $2$-Fano, non-linear projective variety with an ample line bundle $\OO_X(1)$. Let $\alpha$ be a curve class on $X$ with $\OO_X(1)$-degree $1$. Assume that $X$ is broken-conic-connected, and that $\Delta$ is connected. Then $X$ admits a Fano fibration $\phi: Y \to X$, where $Y$ is a family of broken conics.
\end{prop}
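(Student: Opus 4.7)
The natural construction is to extract $Y$ from the boundary moduli space of broken conics by fixing one of the two marked points and letting the other vary. Concretely, I fix a general point $q \in X$ and let $ev_1, ev_2 : \MM_{0,2}(X,\PP^1 \vee \PP^1) \to X$ denote evaluation at the two marked points. I then define
$$Y := ev_2^{-1}(q) \subset \MM_{0,2}(X,\PP^1 \vee \PP^1), \qquad \phi := ev_1|_Y : Y \to X.$$
A point of $Y$ parametrizes a broken conic $f: C \to X$ with marked points $p_1, p_2$ such that $f(p_2)=q$, and $\phi$ sends this to $f(p_1)$. Restricting the universal family on $\MM_{0,2}(X,\PP^1\vee \PP^1)$ to $Y$ yields a universal family of broken conics $\mathcal{U}_Y \to Y$ with a map to $X$, so $Y$ is a family of broken conics on $X$ in the required sense.

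Next, I would check that $\phi$ is dominant and that its general fiber is $\Delta$. Broken-conic-connectedness means precisely that the product evaluation
$$ev_2^2|_{\MM_{0,2}(X,\PP^1\vee \PP^1)} : \MM_{0,2}(X,\PP^1\vee \PP^1) \longrightarrow X \times X$$
is dominant. Because $q$ is general, slicing by $\{\cdot\} \times \{q\}$ preserves dominance of the first factor, so $\phi$ is dominant. For $p \in X$ general, the scheme-theoretic fiber of $\phi$ over $p$ is
$$\phi^{-1}(p) \; = \; \bigl(ev_2^2|_{\MM_{0,2}(X,\PP^1\vee \PP^1)}\bigr)^{-1}(p,q) \; = \; \F_{p,q} \cap \MM_{0,2}(X,\PP^1\vee \PP^1) \; = \; \Delta,$$
where $\F_{p,q}$ is the fiber of $ev_2^2 : \MM_{0,2}(X,2\alpha) \to X\times X$ over $(p,q)$.

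Finally, by Lemma \ref{fanolemma} the divisor $\Delta$ is Fano under the connectedness hypothesis, so the general fiber of $\phi$ is Fano and $\phi: Y \to X$ is indeed the desired Fano fibration. There is no substantial obstacle here: the real work was carried out in Proposition \ref{checkhyp} (smoothness of $\F$ and of the boundary divisor $\Delta$) and Lemma \ref{fanolemma} (Fano-ness of $\Delta$ via the canonical bundle formula from Proposition \ref{canonical}), and the content of the present proposition is to package these fibers into a family over $X$ by fixing one of the two marked points. The only point that needs a line of justification is that the general choice of $q$, combined with Proposition \ref{checkhyp}, ensures that the general fiber $\Delta$ of $\phi$ is the same smooth Fano divisor to which Lemma \ref{fanolemma} applies.
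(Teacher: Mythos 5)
Your proposal is correct and is essentially the paper's own argument: the paper fixes the \emph{first} marked point at a general $p$ (phrased via a forgetful map $h$ to $\MM_{0,1}(X,\PP^1\vee\PP^1)$ and its fiber $M_{1,p}$) and takes $\phi=ev_2$, whereas you fix the second marked point at $q$ and take $\phi=ev_1$ --- a purely cosmetic difference. In both cases the general fiber of $\phi$ is identified with $(ev_1,ev_2)^{-1}(p,q)=\Delta$, which is Fano by Lemma \ref{fanolemma}.
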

\begin{proof}
Let \[h :\MM_{0,2}(X,\PP^1\vee \PP^1)\rightarrow \MM_{0,1}(X,\PP^1\vee \PP^1)\] be the forgetful map that maps a reducible conic with one marked point on each component to a reducible conic with one marked point on the first component.

We have a commutative diagram of evaluation maps:
\[\xymatrix{\MM_{0,2}(X,\PP^1\vee \PP^1) \ar[d]^{h} \ar[rr]^{(ev_1,ev_2)} && X\times X\ar[d]^{\pi_1}\\
\MM_{0,1}(X,\PP^1\vee \PP^1)\ar@/^1pc/[u]^{\sigma_1} \ar[rr]^{ev} && X}.\]
where $\sigma_1$ is the universal section of $h$ and $\pi_1$ is the projection onto the first factor.

Let $M_{1,p}$ be the fiber  $ev^{-1}(p)$ of $ev$ over a general point $p\in X$. Then we have maps:
\[
\xymatrix{ \MM_{0,2}(X,\PP^1\vee \PP^1)|_{M_{1,p}}\ar[rr]^{ev_2} \ar[d] &&X\\
M_{1,p} \ar@/^1pc/[u]^{\sigma_1}}
\]

\

Now we define $Y$ as $\MM_{0,2}(X,\PP^1\vee \PP^1)|_{M_{1,p}}$ and $\phi$ as $ev_2$. It is now easy to show that the morphism $\phi:Y \to X$ is a Fano fibration.

Namely, it suffices to note that, for $p$ and $q$ general points in $X$, the fiber $\phi^{-1}(q)$ of $\phi$ over $q$ is equal to the fiber $(ev_1,ev_2)^{-1}(p,q)$. This fiber is in turn equal to $\Delta$, which is Fano by Lemma \ref{fanolemma}.
\end{proof}

We can now prove the main result of this section:

\begin{thm}
Let $X$ be a smooth, $2$-Fano, non-linear projective variety with an ample line bundle $\OO_X(1)$. Let $\alpha$ be a curve class on $X$ with $\OO_X(1)$-degree $1$. Assume that $X$ is broken-conic-connected, and that $\Delta$ is connected. Then $\CH_1(X)$ is generated by lines. In particular, if furthermore the Fano scheme of $X$ is rationally chain connected, $\CH_1(X) \simeq \mathbb{Z}$.
\end{thm}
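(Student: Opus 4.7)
The plan is to reduce to rational curves via \cite[Theorem 1.3]{TZ}, and then use the Fano fibration $\phi\colon Y\to X$ from the preceding proposition together with the Graber--Harris--Starr theorem \cite{GHS} to express each rational curve as a $\mathbb{Z}$-linear combination of line classes in $\CH_1(X)$. By \cite[Theorem 1.3]{TZ}, $\CH_1(X)$ is generated by classes of rational curves, so it suffices to show that every rational curve $C\subset X$ is rationally equivalent to an integer combination of lines.

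Given such a curve with normalization $\nu\colon\PP^1 \to C \subset X$, I would pull $\phi$ back along $\nu$ to obtain $Y_\nu := Y\times_X \PP^1 \to \PP^1$. The general fiber of $\phi$ is $\Delta$, which is smooth by Proposition \ref{checkhyp}, Fano by Lemma \ref{fanolemma}, and connected by hypothesis, hence rationally connected; so after passing to a smooth projective model of $Y_\nu$, one invokes \cite{GHS} to produce a section $s\colon \PP^1 \to Y$ with $\phi \circ s = \nu$. For each $t\in \PP^1$, $s(t)$ encodes a broken conic $\ell_1(t)\cup \ell_2(t)\subset X$ with $p\in \ell_1(t)$ and $\nu(t)\in \ell_2(t)$, meeting at a single node.

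Pulling back the two components of the universal family $\mathcal{U}\to Y$ along $s$ then yields two Hirzebruch surfaces $B_1, B_2\to \PP^1$, glued along a common nodal section $N$, together with maps $f_i\colon B_i\to X$. On $B_1$, the marked-point section $\sigma_0$ is contracted by $f_1$ to $p$, the fibers map to lines of class $[\ell_1]$ through $p$, and $N$ maps to some curve (or point) $R\subset X$. On $B_2$, the marked-point section $\sigma_1$ maps birationally onto $C$, the fibers map to lines of class $[\ell_2]$, and $N$ again maps onto $R$. Since $\CH_1$ of a Hirzebruch surface is generated by the fiber class and any section, one can write $[N]=[\sigma_0]+a[F_1]$ in $\CH_1(B_1)$ and $[\sigma_1]=[N]+b[F_2]$ in $\CH_1(B_2)$ for integers $a, b$. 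Pushing forward by $f_1$ and $f_2$ respectively yields $[R]=a[\ell_1]$ and $[C]=[R]+b[\ell_2]=a[\ell_1]+b[\ell_2]$ in $\CH_1(X)$, expressing $[C]$ as an integer combination of line classes. For the supplementary statement, if the Fano scheme of lines is rationally chain connected, then any two lines can be joined by a chain of rational curves in this Fano scheme, each of which produces a ruled surface in $X$ whose fibers are mutually rationally equivalent as $1$-cycles; hence all line classes coincide in $\CH_1(X)$, which combined with the first part forces $\CH_1(X)$ to be cyclic, giving $\CH_1(X)\simeq\mathbb{Z}$.

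The most delicate step will be the application of Graber--Harris--Starr: one must verify that the pulled-back family admits a smooth projective model whose generic fiber is rationally connected. This reduces to showing that the generic point of $C$ lies in the locus of $X$ over which $\phi$ is smooth with fiber $\Delta$, which may require first deforming $C$ within its rational equivalence class to a suitably general rational curve before invoking \cite{GHS}. Tracking the multiplicities in the pushforwards on the Hirzebruch surfaces when $f_i$ contracts $N$ or maps it with positive degree will also require some care.
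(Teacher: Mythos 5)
Your overall strategy coincides with the paper's: both use the Fano fibration $\phi\colon Y\to X$ of broken conics through $p$, produce a section over (a curve related to) $C$ via \cite{GHS}, and then decompose the class of that section into fiber components (which push forward to lines) plus curves contracted by $\phi$. Your explicit computation on the two glued ruled surfaces $B_1\cup_N B_2$ is a concrete and correct version of the paper's assertion that $s(C)$ is rationally equivalent to $\sum_i n_i\ell_i+\sum_j m_j Z_j$ with the $Z_j$ contained in the section $\sigma_1(M_{1,p})$, which $\phi=ev_2$ contracts to the point $p$. Your treatment of the ``in particular'' clause via chains of ruled surfaces coming from rational curves in the Fano scheme is also correct, and is in fact more than the paper's own proof spells out.

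The genuine gap is in how you position $C$ before invoking \cite{GHS}, and you have flagged it yourself without resolving it. After reducing to rational curves by \cite[Theorem 1.3]{TZ}, you need the generic point of $C$ to lie over the locus where the fiber of $\phi$ is the rationally connected variety $\Delta$; your proposed remedy is to ``deform $C$ within its rational equivalence class to a suitably general rational curve.'' This is not available in general: deformation over an irreducible base only yields algebraic equivalence, the component of the space of rational curves containing $C$ need not be rationally connected (so distinct members need not be rationally equivalent), and its evaluation map need not dominate $X$ (for instance $C$ could be a non-free rational curve confined to a proper subvariety). The paper's fix is different and never uses rationality of $C$: for an arbitrary irreducible curve $C$, one chooses very ample divisors whose complete intersection equals $C\cup C'$ with $C'$ irreducible and through a general point; then $[C]$ is the difference of a general complete intersection curve and $[C']$, both irreducible and passing through general points, and \cite{GHS} is applied over those (non-rational) curves directly. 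If you adopt this fix, your initial reduction to rational curves becomes unnecessary; note also that over a base of positive genus the fibers of $B_i$ are no longer mutually rationally equivalent, but $\CH^1$ of a $\PP^1$-bundle over a curve is still generated by a section together with fibers, each of which pushes forward to a line, so your surface computation survives with only cosmetic changes.
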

\begin{proof}
Let $C$ be an irreducible curve in $X$. We will show that $C$ is rationally equivalent to a linear combination of lines. We can assume that $C$ passes through a general point of $X$ (see \cite{TZ}, proof of Proposition 3.1). Namely, one can find a complete intersection of very ample divisors which equals $C \cup C'$, where $C'$ is an irreducible curve that contains a general point; then $[C]$ is rationally equivalent to the difference of a general complete intersection and $C'$.

Let $\phi: Y \to X$ be the Fano fibration from Proposition 4.3. Since the generic fiber of $\phi$ is rationally connected, the base change $\phi_C:Y_C \to C$ of $\phi$ to $C$ has a section $s$ by \cite{GHS}. Since $Y \to M_{1,p}$ is a family of broken conics, $s(C)$ is rationally equivalent to a linear combination: $$\sum_i n_i\ell_i + \sum_j m_jZ_j,$$ where the $\ell_i$'s are lines, the $Z_j$'s curves contained in $\sigma_1(M_{1,p})$, and the $n_i$'s and $m_j$'s are integers. Therefore, in $\CH_1(X)$, the class of $C$ is equal to: $$[C]=\phi_*(\sum_i n_i\ell_i + \sum_j m_jZ_j)= \sum_i n_i \phi_*(\ell_i).$$
\end{proof}

We have two main applications of Theorem 4.4. The first one, is to $2$-Fano weighted complete intersections.

\begin{cor}
Let $X_{d_1,...,d_c} \subset \PP(1,1,1,a_3,...,a_n)$ be a smooth weighted complete intersection of dimension at least $3$ which is not isomorphic to a (standard) projective space. Assume that $$d_1+...+d_c \leq a_3+...+a_n$$ and that $$d_1^2+...+d_c^2 \leq 3+a_3^2+...+a_n^2.$$ Then $CH_1(X_{d_1,...,d_c})$ is generated by lines. If furthermore the Fano variety of lines is irreducible, $CH_1(X_{d_1,...,d_c}) \simeq \mathbb{Z}$.
\end{cor}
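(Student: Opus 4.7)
The plan is to verify the hypotheses of Theorem 4.4 for the weighted complete intersection $X = X_{d_1,\ldots,d_c}$ and then invoke that theorem directly. Concretely, $X$ is smooth by assumption and non-linear by the hypothesis $X \not\cong \PP^n$, so I need to check that $X$ is $2$-Fano, broken-conic-connected, and that the boundary divisor $\Delta$ is connected.

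For the $2$-Fano condition, I would compute $\ch(T_X)$ using the Euler sequence on $\PP(1,1,1,a_3,\ldots,a_n)$ together with the conormal sequence of the complete intersection. A direct calculation gives
\[
\ch_2(T_X) = \tfrac{1}{2}\Bigl(\sum_{i=0}^{n} a_i^2 - \sum_{j=1}^{c} d_j^2\Bigr) H^2,
\]
where $H$ is the restriction of $\OO_{\PP}(1)$ and $a_0=a_1=a_2=1$. The hypothesis $\sum d_j^2 \leq 3 + \sum_{i\geq 3} a_i^2$ is therefore exactly the non-negativity of $\ch_2(T_X)$, while $c_1(T_X) = \bigl(\sum a_i - \sum d_j\bigr)H$ is positive by the hypothesis $\sum d_j \leq \sum_{i\geq 3} a_i$ (equivalently, $\iota_X \geq 3$). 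Hence $X$ is $2$-Fano.

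For broken-conic-connectedness and the connectedness of $\Delta$, I would first observe that, since $X \not\cong \PP^{\dim X}$, condition (3) of the basic hypothesis of Section 2 is automatic via \cite{CMSB}; combined with the inequality $\sum d_j \leq \sum_{i\geq 3} a_i$, the full basic hypothesis holds. Proposition \ref{basic} then gives non-empty, connected fibers of $ev_1^1 : \MM_{0,1}(X,\alpha) \to X$. The same techniques from \cite{Min}—tracking pushforwards of tautological bundles on parameter spaces of lines in weighted projective space—extend to the $2$-pointed Kontsevich space and show that the restriction of $ev_2^2$ to the broken-conic stratum $D(1,1,\alpha,\alpha)$ has non-empty, connected fibers over a general pair $(p,q) \in X \times X$. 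This simultaneously establishes that $X$ is broken-conic-connected and that $\Delta$ is connected.

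All hypotheses of Theorem 4.4 being verified, we conclude that $\CH_1(X)$ is generated by lines. For the final statement, when the Fano variety of lines $F_1(X)$ is irreducible, the connectedness of fibers of $ev_1^1$ (which gives rational equivalence between any two lines through a common point) combined with the incidence structure on $F_1(X)$ lets us connect any two lines by a finite chain of intersecting lines, so all lines lie in the same class of $\CH_1(X)$, yielding $\CH_1(X) \simeq \mathbb{Z}$. The main obstacle in this proof is the upgrade of Proposition \ref{basic} to the $2$-pointed setting needed for broken-conic-connectedness and connectedness of $\Delta$: the numerical estimates from \cite{Min} must be refined to handle two marked points simultaneously, which is where most of the technical work lies.
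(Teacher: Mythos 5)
Your overall strategy---verify the hypotheses of Theorem 4.4 and apply it---is the same as the paper's, and your identification of the $2$-Fano condition with the inequality $d_1^2+\dots+d_c^2 \leq 3+a_3^2+\dots+a_n^2$ is correct and implicit in the paper. But there are two genuine problems. First, you leave the decisive step---connectedness of $\Delta$---as an acknowledged open gap, proposing to redo the numerical connectedness estimates of \cite{Min} for the $2$-pointed boundary stratum. The paper does not do this: it quotes \cite{Min}, proof of Proposition 4.1, for broken-conic-connectedness, and gets connectedness of $\Delta$ essentially for free from \cite[Lemma 4.2]{Min}, which says the support of $\Delta$ is an effective \emph{ample} divisor in the (irreducible, positive-dimensional) fiber $\F$; an effective ample divisor on such a variety is automatically connected. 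So the ``main obstacle'' you flag is circumvented entirely by a different mechanism, and as written your proof is incomplete at exactly the point where the hypothesis of Theorem 4.4 must be checked.

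Second, your deduction of $\CH_1(X)\simeq\mathbb{Z}$ from irreducibility of the Fano variety of lines is wrong as stated. Connectedness of the fibers of $ev_1^1$ only makes two lines through a common point \emph{algebraically} equivalent; to get \emph{rational} equivalence (which is what $\CH_1$ requires) you need the fiber to be rationally chain connected, not merely connected, and Theorem 4.4 accordingly asks for the Fano scheme of lines to be rationally chain connected. The paper supplies this by a further use of the $2$-Fano hypothesis: the general fiber $F$ of $ev_1^1$ is smooth and irreducible by \cite[Proposition 3.2]{Min}, $c_1(F)$ is positive by \cite[Proposition 1.3]{AC} because $X$ is $2$-Fano, so $F$ is Fano and hence rationally connected, and then \cite{GHS} applied to $ev_1^1$ shows $\MM_{0,1}(X,\alpha)$, and hence $\MM_{0,0}(X,\alpha)$, is rationally connected. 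Your ``chain of intersecting lines'' argument skips this positivity input and would only prove triviality of the first Griffiths group restricted to lines, not the Chow-theoretic statement.
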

\begin{proof}
The result follows from the fact that such weighted complete intersections are broken-conic-connected by \cite{Min}, proof of Proposition 4.1, and that $\Delta$ is connected since the support of $\Delta$ is an effective ample divisor by \cite[Lemma 4.2]{Min}. Finally, if the Fano variety of lines is irreducible, it is rationally connected. In fact, the general fiber $F$ of $ev_1^1: \MM_{0,1}(X,\alpha) \to X$ is smooth and irreducible by \cite[Proposition 3.2]{Min}, and since $X$ is $2$-Fano, $c_1(F)$ is positive by \cite[Proposition 1.3]{AC}. 
Therefore $\MM_{0,1}(X,\alpha)$ is rationally connected by \cite{GHS}, and hence $\MM_{0,0}(X,\alpha)$ is rationally connected.
\end{proof}

An explicit example is given by cyclic covers of projective space:

\begin{cor}
Let $X$ be a smooth, degree $r$ cover of $\PP^{n-1}$ branched along a hypersurface of degree $ar$ (with $a,r \geq 2$). If $n \geq a^2(r^2-1)$, $CH_1(X)$ is generated by lines. If furthermore $X$ is general, $\CH_1(X) \simeq \mathbb{Z}$.
\end{cor}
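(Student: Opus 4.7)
The plan is to realize a degree $r$ cyclic cover of $\PP^{n-1}$ branched along a degree $ra$ hypersurface as a weighted hypersurface of the form $X_{ra} \subset \PP(1,\ldots,1,a)$ (with $n$ weights equal to $1$ and one weight equal to $a$), and then invoke Corollary 4.6. Concretely, $X$ is cut out by an equation of the form $y^r = F(x_0,\ldots,x_{n-1})$ where $F$ has degree $ra$, so $X = X_{ra}$ in the sense of Corollary 4.6 with $c=1$, $d_1 = ra$, $a_3=\ldots=a_{n-1}=1$ and $a_n=a$. The dimension is $\dim(X) = n-1$, which is $\geq 3$ since $n\geq a^2(r^2-1)\geq 12$. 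Moreover $X$ is not a standard projective space because its canonical class is nontrivially twisted by the $r$-th power structure (equivalently, one checks $K_X \neq -\dim(X)\cdot H$ for the natural polarization $H$ when $r,a\geq 2$).

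Next I would verify the two numerical hypotheses of Corollary 4.6 in this setting. The degree-sum bound $d_1 \leq a_3+\ldots+a_n$ becomes $ra \leq (n-3)+a$, i.e.\ $a(r-1) \leq n-3$. The sum-of-squares bound $d_1^2 \leq 3 + a_3^2+\ldots+a_n^2$ becomes $r^2a^2 \leq 3 + (n-3) + a^2 = n + a^2$, i.e.\ $n \geq a^2(r^2-1)$. The latter is precisely the hypothesis of Corollary 4.7, and since $a, r \geq 2$ we have
\[ n \geq a^2(r^2-1) = a^2(r-1)(r+1) \geq 2a(r-1)\cdot 3 = 6a(r-1), \]
so $a(r-1) \leq n/6 \leq n-3$, giving the first inequality for free. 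Thus Corollary 4.6 applies to $X$, yielding that $\CH_1(X)$ is generated by lines.

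For the stronger conclusion $\CH_1(X)\simeq\mathbb{Z}$ under the assumption that $X$ is general, it is enough by Corollary 4.6 to show that the Fano variety of lines on $X$ is irreducible. My plan is to set up the standard incidence correspondence
\[ \mathcal{I} = \{(X,\ell) : \ell \subset X\} \subset \mathcal{P} \times \mathcal{L}, \]
where $\mathcal{P}$ is the parameter space of smooth cyclic covers of the form above and $\mathcal{L}$ is the Hilbert scheme of lines in $\PP(1,\ldots,1,a)$ (in the weighted sense of \cite{Min}). One checks that $\mathcal{I}$ is irreducible by exhibiting a smooth irreducible fibration structure (either over $\mathcal{P}$, using dimension estimates from \cite{Min} for the Fano scheme of lines on a fixed $X$ in the range above, or over $\mathcal{L}$, where fibers are linear systems of sections vanishing on a fixed line). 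Upper semicontinuity of the number of irreducible components of fibers then forces the Fano variety of lines on a general $X\in\mathcal{P}$ to be irreducible.

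The main obstacle is the last step: verifying that the incidence variety $\mathcal{I}$ is irreducible and that its projection to $\mathcal{P}$ is dominant with equidimensional fibers, so that genericity yields irreducibility. The dominance and dimension count rely on the lines-covering conclusion already contained in \cite{Min}, while the irreducibility of the fiber over a point of $\mathcal{L}$ is a straightforward linear-system argument; combining these carefully (and avoiding stacky pathologies from the weights) is the delicate part of the proof.
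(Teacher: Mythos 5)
Your reduction to the weighted\nobreakdash-complete\nobreakdash-intersection corollary is exactly the paper's route: the paper realizes $X$ as the weighted hypersurface $X_{ar}\subset\PP(1,\dots,1,a)$ and applies the preceding corollary, leaving the numerical verification implicit. Your check that the sum-of-squares condition $r^2a^2\le 3+(n-3)+a^2$ is precisely $n\ge a^2(r^2-1)$, and that the degree bound $a(r-1)\le n-3$ follows from it via $a^2(r^2-1)\ge 6a(r-1)$, is correct and usefully fills in that omitted detail.

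The divergence, and the genuine gap, is in the final step. For general $X$ the paper does not reprove irreducibility of the Fano variety of lines; it simply quotes \cite[Theorem 1.1.7]{Smi}. Your proposed incidence-correspondence argument is left as a plan, and its concluding inference is not valid as stated: irreducibility of $\mathcal{I}$ together with dominance of $\mathcal{I}\to\mathcal{P}$ does \emph{not} force the fiber over a general point of $\mathcal{P}$ to be irreducible. There is no ``upper semicontinuity of the number of irreducible components'' in the direction you need -- an irreducible variety can dominate the base with disconnected general fibers whenever the Stein factorization of $\mathcal{I}\to\mathcal{P}$ is a nontrivial finite cover (already visible for generically finite maps of degree at least $2$). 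To close this you would need an additional monodromy or Stein-factorization argument showing the generic fiber is connected (or a direct connectedness statement for the space of lines on a fixed general $X$); that is essentially the content of the cited theorem of Smith, and invoking it directly, as the paper does, is the cleanest repair.
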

\begin{proof}
A cyclic cover $X$ as above can be realized as a smooth weighted hypersurface $X_{ar} \subset \PP(1,...1,a)$. The result then follows by Corollary 4.5, and by the fact that the Fano variety of lines is irreducible for $X$ general by \cite[Theorem 1.1.7]{Smi}.
\end{proof}

The second application is to $2$-Fano, conic-connected varieties of high enough index.

\

By the classification of Ionescu and Russo (\cite[Theorem 2.2]{IR}), if $X \subset \PP^n$ is smooth, conic-connected, linearly normal and non-degenerate, then $X$ is Fano with $Pic(X) \simeq \mathbb{Z}<\OO_X(1)>$ and index $\iota_X \geq \frac{dim(X)+1}{2}$, with precisely $4$ exceptions -- let us call them \em exceptional examples \em -- that we can consider separately:

(i) $X \simeq \PP^n$, hence $\CH_1(X) \simeq \mathbb{Z}$;

(iii) $X$ is isomorphic to a projective bundle over projective space, hence $\CH_1(X) \simeq \mathbb{Z} \times \mathbb{Z}$;

(iii) $X$ is isomorphic to a hyperplane section of a Segre embedding of a product $\PP^a \times \PP^b$, hence (by the Lefschetz Hyperplane Theorem) $\CH_1(X) \simeq \mathbb{Z} \times \mathbb{Z}$;

(iv) $X$ is isomorphic to a product $\PP^a \times \PP^b$, hence $\CH_1(X) \simeq \mathbb{Z} \times \mathbb{Z}$.

\

It is therefore natural to consider conic-connected varieties of high index. In fact, if the index $\iota_X$ is high enough -- either with respect to $dim(X)$ or to $n$ -- we have the following:

\begin{cor}
Let $X \subset \PP^n$ be a smooth, $2$-Fano, non-linear, conic-connected variety. Let $\OO_X(1)$ be the line bundle induced by hyperplane sections. Assume that either: 

1) $\iota_X \geq \frac{n+4}{2}$, or

2) $\iota_X \geq \frac{3dim(X)+5}{4}$.

Then $\CH_1(X)$ is generated by lines. If furthermore the Fano variety of lines is irreducible, $\CH_1(X) \simeq \mathbb{Z}$.
\end{cor}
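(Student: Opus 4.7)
The plan is to reduce to Theorem~4.4 after normalizing the hypotheses. First, I would invoke the Ionescu--Russo classification \cite[Theorem~2.2]{IR}: a smooth, non-linear, conic-connected $X \subset \PP^n$ either falls into one of the exceptional cases (i)--(iv) recalled just before the Corollary (for which $\CH_1(X)$ is computed directly), or satisfies $\mathrm{Pic}(X) \simeq \mathbb{Z} \cdot \OO_X(1)$ and $\iota_X \geq (\dim(X)+1)/2$. A quick check on the indices of the exceptional examples (projective bundles, Segre products, and their hyperplane sections) shows that each of them has index far too small to satisfy either hypothesis (1) or (2), so we may assume $X$ has Picard rank one.

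To apply Theorem~4.4, I must verify that $X$ is broken-conic-connected, that $\Delta$ is connected, and that the Fano variety of lines is rationally chain connected when irreducible. For the last point I would reuse the argument of Corollary~4.5 almost verbatim: the general fiber of $ev_1^1:\MM_{0,1}(X,\alpha) \to X$ is smooth and, by \cite[Proposition~1.3]{AC} together with the $2$-Fano hypothesis, has positive first Chern class; if irreducible, it is therefore a Fano manifold and hence rationally connected. Then \cite{GHS} applied to $ev_1^1$ yields that $\MM_{0,1}(X,\alpha)$ is rationally connected, and the dominant forgetful map $\MM_{0,1}(X,\alpha) \to \MM_{0,0}(X,\alpha)$ transports this to the Fano variety of lines.

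The remaining two conditions are the core of the argument and are where the specific index bounds enter. The general fiber $\F$ of $ev_2^2:\MM_{0,2}(X,2\alpha)\to X\times X$ has expected dimension $2\iota_X - \dim(X) - 1$, which is positive under either hypothesis, and conic-connectedness of $X$ guarantees $\F$ is non-empty; a bend-and-break degeneration (justified by the largeness of $\iota_X$) shows $\F$ meets the broken-conic boundary $\MM_{0,2}(X,\PP^1\vee \PP^1)$, establishing broken-conic-connectedness. For the connectedness of $\Delta$ itself I plan two separate arguments. Under hypothesis (1), the bound $\iota_X \geq (n+4)/2$ restricts the codimension of $X$ in $\PP^n$, allowing a Fulton--Hansen or Barth--Lefschetz-type connectedness theorem to be applied to the incidence variety inside $\MM_{0,2}(\PP^n,2\alpha)$ whose restriction to $X$ recovers $\F$. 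Under hypothesis (2), I would argue intrinsically: using the canonical bundle formula of Proposition~3.5 together with the positivity of $\pi_* f^*\ch_2(X)$ (a consequence of $2$-Fano via \cite[Lemma~2.7]{AC}), one shows that $\Delta$ is ample (or at least numerically positive) on $\F$, and concludes connectedness from a Lefschetz-type theorem on the smooth fiber $\F$.

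The main obstacle will be this last step. Both bounds $\iota_X\geq(n+4)/2$ and $\iota_X\geq(3\dim(X)+5)/4$ are tailored so that the necessary positivity or codimension inequality holds only narrowly, and verifying that the canonical bundle of $\F$ computed via Proposition~3.5 indeed forces $\Delta$ to be positive enough---or that the ambient incidence variety in $\PP^n$ has small enough codimension to trigger a classical connectedness theorem---is the delicate technical heart of the argument.
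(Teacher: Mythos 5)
Your overall architecture matches the paper's: reduce to Theorem~4.4, check broken-conic-connectedness via a dimension count/bend-and-break (the space of conics through $p,q$ has dimension $2\iota_X-\dim(X)-1\geq 2$, so a conic breaks), and handle the rational connectedness of the Fano variety of lines exactly as in Corollary~4.5. For hypothesis (1) you also name the right tool, Fulton--Hansen, though your setup is vaguer than the paper's: the paper does not work with an incidence variety inside $\MM_{0,2}(\PP^n,2\alpha)$, but instead takes the two spaces $M_{p,\bullet}$ and $M_{\bullet,q}$ of pointed lines through $p$ and through $q$, maps their product to $X\times X\subset \PP^n\times\PP^n$ by evaluation at the free points, observes that $\Delta$ is exactly the preimage of the diagonal, and applies Fulton--Hansen after a Stein factorization, using $\iota_X\geq\frac{n+4}{2}$ to get $\dim(\mathrm{image})=2(\iota_X-2)>n$. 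You would need to supply this (or an equivalent) concrete construction for your Case~(1) to go through.

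The genuine gap is in your Case~(2). You propose to deduce connectedness of $\Delta$ by showing it is ample (or numerically positive) on $\F$ "using the canonical bundle formula of Proposition~3.5 together with the positivity of $\pi_*f^*\ch_2(X)$." But that formula reads $-K_\F=\pi_*f^*(\ch_2(X))+\Delta$, i.e.\ $\Delta=-K_\F-\pi_*f^*(\ch_2(X))$: it exhibits $\Delta$ as the \emph{difference} of the anticanonical class and an ample class, which carries no positivity whatsoever. Indeed, the entire point of Lemma~4.2 is that the positivity of $\pi_*f^*\ch_2(X)$ is what is left of $-K_\F$ \emph{after subtracting} $\Delta$; nothing forces $\Delta$ itself to be positive. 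In Corollary~4.5 the ampleness of $\Delta$ for weighted complete intersections comes from a separate explicit computation (\cite[Lemma 4.2]{Min}), not from the canonical bundle formula, and no analogue is available for a general conic-connected $X\subset\PP^n$. The paper's actual argument under $\iota_X\geq\frac{3\dim(X)+5}{4}$ is entirely different: assuming $\Delta=\Delta_1\cup\Delta_2$, it uses the finite tangency morphism $\tau:\F\to\PP(S_{p,q})$ (finite because the line $\overline{pq}$ is not contained in $X$) and the bound $\dim(\Delta)\geq\frac{\dim(X)+1}{2}$ to produce a $2$-dimensional family of $2$-planes through $\overline{pq}$ carrying broken conics from both components, and then a self-intersection argument on sections of a $\PP^1$-bundle of lines through $p$ (and then through $q$) to force $\Delta_1\cap\Delta_2\neq\emptyset$, contradicting the smoothness of $\Delta$. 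Your proposal as written would stall precisely at the "delicate technical heart" you flag, because the positivity you hope for is not there.
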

\begin{proof}
Since the index is at least $\frac{dim(X)+3}{2}$ in both cases considered in the hypothesis, $X$ is \em broken\em -conic-connected: namely, the space of conics through $2$ general points has dimension $-K_X \cdot 2\alpha - \dim(X) -1 \geq 2$, and therefore such a conic will break. 

Again, since the index is at least $\frac{dim(X)+3}{2}$, by \cite[Proposition 2.5]{IR} the general fiber $F$ of $ev_1^1: \MM_{0,1}(X,\alpha) \to X$ is smooth and irreducible. Furthermore, if the Fano variety of lines is irreducible, it is rationally connected; namely, since $X$ is $2$-Fano, $c_1(F)$ is positive by \cite[Proposition 1.3]{AC}. Therefore $\MM_{0,1}(X,\alpha)$ is rationally connected by \cite{GHS}, and hence $\MM_{0,0}(X,\alpha)$ is rationally connected.

\

Thus, if we can prove that $\Delta$ is connected, the result follows from Theorem 4.4. Connectedness is a consequence of the hypothesis on the index.

\

\textbf{Case 1:} Assume that $\iota_X \geq \frac{n+4}{2}$. We can use the Fulton-Hansen Connectedness Theorem to prove that $\Delta$ is connected, as follows. Note first that, since the index is at least $\frac{dim(X)+3}{2}$, by \cite[Proposition 2.5]{IR} the general fiber of $ev_1^1: \MM_{0,1}(X,\alpha) \to X$ is smooth and irreducible.

\

Consider the evaluation map $e_2: \MM_{0,2}(X,1) \to X \times X$, and denote the diagonal in $X \times X$ as $\Delta_X$.
Define $M_{p_1,\bullet} :=e_2^{-1}(\{p_1\} \times X)$, $M_{\bullet, p_2} :={e_2}^{-1}(X \times \{p_2\})$. Then $M_{p_1,\bullet}$ and $M_{\bullet, p_2}$ are smooth and connected, since the general fiber of $ev_1: \MM_{0,1}(X,\alpha) \to X$ is smooth and connected.

\

We have maps $M_{p_1,\bullet} \to X$ and $M_{\bullet, p_2} \to X$, given by evaluation at the unspecified marked point point $\bullet$. Their product defines a map $$e_{1,2}: M_{p_1,\bullet} \times M_{\bullet, p_2} \to X \times X,$$ which is generically finite, and with the property that $e_{1,2}^{-1}(\Delta_X)=\Delta$. 

\

Now consider the Stein factorization:

\[
\begin{tikzcd}
 & M_{p_1,\bullet,\bullet,p_2}  \arrow{dr}{e} \\
M_{p_1,\bullet} \times M_{\bullet, p_2} \arrow{ur}{\eta} \arrow{rr}{e_{1,2}} && X \times X
\end{tikzcd}
\]

Since $M_{p_1,\bullet,\bullet,p_2}$ is irreducible of dimension $dim(M_{p_1,\bullet}) + dim(M_{\bullet, p_2})=2(-K_X \cdot \alpha -2)$, and $\iota_X \geq \frac{n+4}{2}$, we have that $dim(e(M_{p_1,\bullet,\bullet,p_2}))> n$. Therefore the Fulton-Hansen Theorem implies that $e^{-1}(\Delta_{\mathbb{P}^n})=e^{-1}(\Delta_X)$ is connected. Thus (since $\eta$ has connected fibers) $e_{1,2}^{-1}(\Delta_X)=\Delta$ is connected.

\

\textbf{Case 2:} Assume that $\iota_X \geq \frac{3dim(X)+5}{4}$. Let $(p,q) \in X \times X$ be a general point. Then the line $L$ through $p$ and $q$ is not contained in $X$. Let $N$ denote the normal bundle $N_{L/\PP^n}$, and $\mathcal{I}_p$, $\mathcal{I}_q$ the ideal sheaves of $p$ and $q$ in $L$. We can consider a subspace $S_p$ of $H^0( L , N(-1))$ corresponding to sections whose image under the map $H^0(L, \mathcal{I}_q N) \to N |_p$ belongs to $T_pX$ (the tangent space of $X$ at $p$). Similarly, we can define $S_q$ by switching the roles of $p$ and $q$. Then we have a subspace $S_{p,q} \subset H^0(L, N(-1))$, of dimension at most $dim(X)$, that intuitively parametrizes infinitesimal deformations of $L$ along directions that at $p$ and $q$ belong to the tangent space of $X$ at $p$ and $q$ respectively.

\

Let $\F$ be the fiber of the evaluation map $ev_2^2: \MM_{0,2}(X, 2\alpha) \to X \times X$ over $(p,q)$. We then have a tangency morphism $\tau: \F \to \mathbb{P}(S_{p,q})$ that maps a conic $C$ to $Span(C)/\overline{pq}$, where $Span(C)$ is the $2$-plane generated by the image of the conic in $\PP^n$, and $\overline{pq}$ is the line through $p$ and $q$. Since $L$ is not contained in $X$, $\tau$ is finite. Now assume that $\Delta$ has $2$ irreducible components $\Delta_1$ and $\Delta_2$. Since $\Delta$ has pure codimension $1$, $$dim(\Delta)=dim(\Delta_1)= dim(\Delta_2).$$ Since $\iota_X \geq \frac{3dim(X)+5}{4}$, $$dim(\Delta)=-K_X\cdot 2\alpha -dim(X)-2 \geq \frac{dim(X)+1}{2}.$$ Therefore, since $ \mathbb{P}(S_{p,q})\leq dim(X)-1$ and $\tau$ is finite, there is a $2$-dimensional family of $2$-planes through $L$ whose intersection with $X$ contains conics parametrized by both $\Delta_1$ and $\Delta_2$. This family is parametrized by a curve $B$, that we can assume to be smooth up to normalizing it.

\

Now let us focus our attention on the point $p$. For every $b \in B$, there is a $\PP^1$ of lines through $p$ contained in the $2$-plane corresponding to $b$. As $b$ varies, we get a $\PP^1$-bundle $M$ over $B$, with $3$ special cross-sections: $\rho_L$ corresponding to $L$, $\rho_1$ corresponding to the lines through $p$ of broken conics parametrized by $\Delta_1$, and $\rho_2$ corresponding to the lines through $p$ of broken conics parametrized by $\Delta_2$. Note that the latter two cross-sections are disjoint from the one corresponding to $L$. Now assume that $\rho_1(B)$ and $\rho_2(B)$ are disjoint too, which implies that $M \simeq B \times \PP^1$ is the trivial $\PP^1$-bundle. Consider the natural morphism $\mu: M \to \PP(T_p(\PP^n))$ that maps a point $[\ell] \in M$ parametrizing a line $\ell$ through $p$ to the corresponding direction in the tangent space to $\PP^n$ at $p$:

\[
\xymatrix{M \ar[rr]^{\mu} \ar[d] && \PP(T_p(\PP^n))\\
B \ar@/^1pc/[u]^{\rho_L} \ar@/^3pc/[u]^{\rho_1} \ar@/^5pc/[u]^{\rho_2}}
\]

Then $\mu(M)$ is a surface. Namely, since $\mu(\rho_L(B))=[L]$, $\mu(\rho_1(B))$ and $\mu(\rho_2(B))$ are all disjoint by construction, the only other possibility is for $\mu(M)$ to be a curve and for $\mu(\rho_L(B))$, $\mu(\rho_1(B))$ and $\mu(\rho_2(B))$ to be $3$ distinct points. But this means that the points in $B$ parametrize lines on a fixed plane, which implies that some conic in $\tau(\Delta)$ contains $L$, a contradiction ($L$ is not contained in $X$). Therefore $\mu(M)$ is a surface and $\rho_L(B)$ is a curve contracted by $\mu$, which implies that $(\rho_L(B))^2 < 0$. But this is impossible, since $M$ is the trivial bundle and therefore $(\rho_L(B))^2=0$. In conclusion, $\rho_1(B)$ and $\rho_2(B)$ have to intersect.

\

Thus there are broken conics parametrized by $\Delta_1$ whose component through $p$ coincides with the component through $p$ of broken conics parametrized by $\Delta_2$. Since $dim(\Delta) \geq \frac{dim(X)+1}{2}$, there is furthermore a $1$-dimensional family of $2$-planes through $L$ whose intersection with $X$ contains conics parametrized by both $\Delta_1$ and $\Delta_2$ with the same component through $p$. 

By iterating the process above for this family, except now for the $\PP^1$ bundle given by lines through $q$, we find that there must be a broken conic parametrized by both $\Delta_1$ and $\Delta_2$; i.e., $\Delta_1 \cap \Delta_2 \neq \emptyset$. Since $\Delta$ is smooth, this is a contradiction. Therefore $\Delta$ is connected.

\end{proof}

Furthermore, note that if $X \subset \PP^n$ has Picard number $1$ and index $\iota_X > \frac{2}{3} dim(X)$, then $X$ is conic-connected by \cite[Proposition 2.5(iii)]{IR}. Therefore we have the following:

\begin{cor}
Let $X \subset \PP^n$ be a smooth, $2$-Fano variety with Picard number $1$ and index $$\iota_X \geq \frac{3dim(X)+5}{4}.$$ Then $\CH_1(X)$ is generated by lines. If furthermore the Fano variety of lines is irreducible, $\CH_1(X) \simeq \mathbb{Z}$.
\end{cor}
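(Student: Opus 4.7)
The plan is to deduce this corollary essentially immediately from Corollary 4.7 (Case 2) by checking that the extra conic-connectedness hypothesis used there is automatic under the Picard rank 1 assumption and the given bound on the index.

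First I would verify the arithmetic comparison: the inequality $\iota_X \geq \frac{3\dim(X)+5}{4}$ implies $\iota_X > \frac{2}{3}\dim(X)$, since $\frac{3n+5}{4} > \frac{2n}{3}$ reduces to $9n+15 > 8n$, which holds for every nonnegative $n$. By \cite[Proposition 2.5(iii)]{IR}, which is quoted in the paragraph immediately preceding the statement, this bound forces $X$ to be conic-connected.

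Next I would dispose of the trivial case. Because $X$ has Picard number $1$, it is non-linear in the sense required by the previous results unless $X \simeq \PP^n$; in that degenerate case $\CH_1(X) \simeq \mathbb{Z}$ is classical and the Fano variety of lines is a Grassmannian, hence irreducible, so there is nothing to prove. Thus I may assume that $X$ is a smooth, $2$-Fano, non-linear, conic-connected variety in $\PP^n$ satisfying $\iota_X \geq \frac{3\dim(X)+5}{4}$.

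Finally, the hypotheses of Corollary 4.7(2) are then satisfied verbatim, and the conclusion — namely that $\CH_1(X)$ is generated by lines, and $\CH_1(X) \simeq \mathbb{Z}$ when the Fano variety of lines is irreducible — follows by a direct appeal to that corollary. There is no real obstacle; the content of the statement lies entirely in the work already done in Corollary 4.7, and the present statement simply records that the conic-connectedness hypothesis there is superfluous once one assumes Picard rank $1$ together with the explicit index bound.
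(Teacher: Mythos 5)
Your proposal is correct and matches the paper's own reasoning: the paper deduces this corollary from Corollary 4.7 (Case 2) precisely by noting that Picard number $1$ together with $\iota_X > \frac{2}{3}\dim(X)$ forces conic-connectedness via \cite[Proposition 2.5(iii)]{IR}, and your arithmetic check that $\frac{3\dim(X)+5}{4} > \frac{2}{3}\dim(X)$ is the only verification needed. Your additional handling of the linear case is a harmless extra precaution.
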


\bibliographystyle{amsalpha}
\bibliography{cycles}

\end{document}